\title[Quadratic systems in higher dimensions]{Uniform-in-time bounds for quadratic reaction-diffusion systems with mass dissipation in higher dimensions}
\author{Klemens Fellner, Jeff Morgan, Bao Quoc Tang}
\address{Klemens Fellner \hfill\break
	Institute of Mathematics and Scientific Computing, University of Graz, Heinrichstrasse 36, 8010 Graz, Austria}
\email{klemens.fellner@uni-graz.at}
\address{Jeff Morgan \hfill\break
Department of Mathematics, 
University of Houston, Houston, Texas 77004, USA}
\email{jmorgan@math.uh.edu}
\address{Bao Quoc Tang \hfill\break
	Institute of Mathematics and Scientific Computing, University of Graz, Heinrichstrasse 36, 8010 Graz, Austria}
\email{quoc.tang@uni-graz.at}
\renewcommand{\O}{\Omega}
\newcommand{\pa}{\partial}
\newcommand{\nx}{\nabla_{\!x}}
\newtheorem{theorem}{Theorem}[section]
\newtheorem{lemma}{Lemma}[section]
\newtheorem{proposition}{Proposition}[section]
\newtheorem{remark}{Remark}[section]
\begin{document}
\subjclass[2010]{35A01, 35K57, 35K58, 35Q92, 92D25}
	\keywords{Reaction-diffusion systems; Mass dissipation; Classical solutions; Global existence; Uniform boundedness; Lotka-Volterra systems}

	\begin{abstract}
		Uniform-in-time bounds of nonnegative classical solutions to reaction-diffusion systems in all space dimension are proved. The systems are assumed to dissipate the total mass and to have locally Lipschitz nonlinearities of at most (slightly super-) quadratic growth. This pushes forward the recent advances concerning global existence of reaction-diffusion systems dissipating mass in which a uniform-in-time bound has been known only in space dimension one or two. As an application, skew-symmetric Lotka-Volterra systems are shown to have unique classical solutions which are uniformly bounded in time in all dimensions with relatively compact trajectories in $C(\overline{\Omega})^m$.
	\end{abstract}

	\maketitle
	
	\tableofcontents
	
\section{Introduction and Main results}
%\subsection{Quadratic reaction-diffusion systems and main results}
Let $n\geq 1$ and $\Omega\subset\mathbb R^n$ be a bounded domain with smooth boundary $\partial\Omega$. We consider the following reaction-diffusion system
\begin{equation}\label{sys}
\begin{cases}
	\partial_t u_i - d_i\Delta u_i = f_i(u), &x\in\Omega, \; t>0,\; i=1,\ldots, m,\\
	\nx u_i \cdot \nu = 0, &x\in\partial\Omega, \; t>0,\; i=1,\ldots, m,\\
	u_i(x,0) = u_{i0}(x), &x\in\Omega, \; i=1,\ldots, m,
\end{cases}
\end{equation}
where $d_i$ are the diffusion coefficients, $\nu$ is the unit outward normal vector on $\partial\Omega$. The system \eqref{sys} is studied in this paper under the following assumptions on the data:
\begin{enumerate}
	[label = (D), ref = D]
	\item\label{D} $d_i>0$ for all $i=1,\ldots, m$,\\
	$u_{i0}\in L^\infty(\Omega)$ and $u_{i0}\geq 0$ a.e. in $\Omega$, for all $i=1,\ldots, m$,
\end{enumerate} and on the nonlinearities:
\begin{enumerate}
	[label=(A\theenumi),ref=A\theenumi]
	\item\label{A1} (Local Lipschitz and Quasi-Positivity) For all $i=1,\ldots, m$, $f_i:\mathbb R^m \to \mathbb R$ is locally Lipschitz continuous and
	\begin{equation}
		f_i(u) \geq 0 \quad \text{ for all } \quad u\in \mathbb R^m_+ \;\text{ with }\; u_i = 0.
	\end{equation}
	\item\label{A2} (Mass-Dissipation) For any $u\in \mathbb R^m_+$,
	\begin{equation*}
		\sum_{i=1}^mf_i(u) \leq 0.
	\end{equation*}
	\item\label{A3} (Slight Super-Quadratic Growth) There exists a constant $\varepsilon>0$ sufficiently small and a constant $K>0$ such that
	\begin{equation*}
	|f_i(u)| \leq K(1+|u|^{2+\varepsilon}) \quad \text{for all} \quad u\in \mathbb R^m,
	\end{equation*}
	for all $i=1, \ldots, m$.
\end{enumerate}

\medskip
Since the nonlinearities are locally Lipschitz, it is well known (see e.g. \cite{Ama85}) that under assumption \eqref{D}, there exists a unique local classical solution to \eqref{sys} on a maximal interval $(0,T_{\max})$\footnote{It is remarked that this local existence does not need the nonnegativity of the initial data.}. 
By a classical solution on a time interval $(0,T)$ we mean a vector of concentrations $u = (u_1, \ldots, u_m)$ which satisfies $u_i \in C([0,T);L^{p}(\O)) \cap C^{1,2}((0,T)\times \overline{\O})\cap L^\infty((0,T-\tau)\times\Omega)$ for all $n<p<\infty$, all $\tau\in (0,T)$, and $u$ satisfies each equation in \eqref{sys} pointwise.

Under the quasi-positivity assumption \eqref{A1}, this solution is nonnegative provided nonnegative initial data. Moreover, 
%we have the following criterion for the global existence
\begin{equation}\label{criteria}
\text{ if } \quad \lim_{t\uparrow T_{\max}}\|u_i(t)\|_{L^\infty(\Omega)} < +\infty,\; i=1,\ldots, m \quad \text{ then } \quad T_{\max} = +\infty.
\end{equation}
The global existence of this local solution is, on the other hand, rather delicate, except for the case $d_i = d$ for all $i=1,\ldots, m$. Indeed, summing up all equations yields
\begin{equation*}
	\partial_t\sum_{i=1}^m u_i - d\Delta \sum_{i=1}^m u_i \leq 0
\end{equation*}
and therefore by maximum principle
\begin{equation*}
	\biggl\|\sum_{i=1}^mu_i(t)\biggr\|_{L^\infty(\Omega)} \leq \biggl\|\sum_{i=1}^mu_{i0}\biggr\|_{L^\infty(\Omega)} \quad \text{ for all } \quad t\in (0,T_{\max}).
\end{equation*}
Considering nonnegative solutions, the above $L^{\infty}$ a priori estimate is sufficient in view of \eqref{criteria} to 
imply the global existence and uniform bounds of solutions. When the diffusion coefficients are different, the situation becomes suddenly much more difficult. The main reason is the lack of a maximum principle for general systems yielding  
$L^{\infty}$ a priori estimates. 

Under the assumptions of this paper, the mass dissipation condition \eqref{A2}, together with the homogeneous Neumann boundary, implies at least a uniform-in-time $L^1$ bound for all $t\in (0,T_{\max})$
\begin{equation}\label{sum_up}
	\frac{\partial}{\partial t}\sum_{i=1}^mu_i - \Delta\left(d_i\sum_{i=1}^m u_i\right) \leq 0 \quad \text{ thus } \quad \sum_{i=1}^m\int_{\Omega}u(x,t)dx \leq \sum_{i=1}^m\int_{\Omega}u_{i0}(x).
\end{equation}
Hence, from the nonnegativity of $u_i$, we have a uniform a priori estimate on solutions in $L^\infty(0,T;L^1(\Omega))$ as long as the solution exists. However, in view of \eqref{criteria} this is obviously not enough to obtain global existence of solutions to \eqref{sys}. 

Another set of available a priori estimates uses an elegant duality argument (see e.g. \cite{Mor89,PS97}). Such estimates show that $u_i\in L^2(\Omega\times(0,T))$, but even this is far from the criteria \eqref{criteria} to get $T_{\max} = +\infty$. In fact, it was shown in \cite{PS97} that there exist systems satisfying both \eqref{A1} and \eqref{A2} but have $L^{\infty}$ blow-up of solutions in finite time, i.e. $T_{\max}<+\infty$\footnote{It is noted that the counter-example was constructed for the case of inhomogeneous Dirichlet boundary conditions. Such a counter-example for homogeneous Neumann boundary conditions is still unknown, up to our knowledge.}. 

The counterexample of \cite{PS97} makes it clear that more conditions are required in order to obtain global existence. Of particular interest is the case when the nonlinearities are of (or are bounded by) polynomial type. Especially, the case of quadratic order has been extensively investigated as it models, for instance, binary reaction in chemistry like the following 
reversible reaction 
\begin{equation}\label{reversible_reaction}
	S_1 + S_2 \leftrightharpoons S_3 + S_4,
\end{equation}
which leads to a purely quadratic $4\times4$ reaction-diffusion system 
or general Lotka-Volterra systems in ecology.

\medskip
To put our work in context, let us review what has been done in the literature. Systems with conditions \eqref{A1}-\eqref{A2} have been an active research topic since the seventies as they appear frequently in applications. Let us mention an incomplete list \cite{CHS78,Rot84,Ama85,HMP87,Mor89,Mor90,FHM97} and many references therein. As mentioned above, with \eqref{A1}-\eqref{A2} alone, it is unlikely that one always has global existence of the classical solution. Therefore one either studies the global existence of a weaker notion of solution or imposes extra assumptions to get global classical solutions. Concerning the former direction, it was shown in \cite{Pie03} that if, for some reason, the nonlinearities are bounded in $L^1(\Omega\times(0,T))$, then one can prove the global existence of a weak solution. Thanks to various duality arguments, which give  $L^2(\Omega\times(0,T))$ or slightly better bounds on solutions, it follows that at most quadratic systems \eqref{sys}, in particular the prototypical system \eqref{reversible_reaction}, have global weak solutions \cite{DFPV07,CDF14}. We refer the interested reader to the extensive review \cite{Pie10} for more details.

More recently, Fischer showed in his paper \cite{Fis15} that if the nonlinearities satisfy an {\it entropy condition}, i.e.
\begin{equation}\label{entropy_condition}
\sum_{i=1}^m f_i(u)(\log u_i + \mu_i)\leq 0 \quad \text{ for all } \quad u \in (0,\infty)^m,
\end{equation}
for some $\mu_1, \ldots, \mu_m \in \mathbb R$, then there exists a global {\it renormalised solution} to \eqref{sys}. 
Note that renormalised solutions do not ensure $L^1$ integrability of the nonlinear reaction terms. 
However, renormalised solutions coincide with the unique classical solution as long as the latter exists \cite{Fis17}. 
Moreover, all renormalised solutions of complex balance reaction-diffusion systems (without boundary equilibria) 
converge exponentially to the positive equilibrium in the same way as weak or classical solutions, \cite{FT18}. 

About the latter research direction, i.e. global existence of classical solutions, the mass dissipation \eqref{A2} is often replaced 
%(w.l.o.g., see Remark \ref{equivalence})
 by the mass conservation condition
\begin{enumerate}
	[label=(A\theenumi'),ref=A\theenumi']
	\setcounter{enumi}{1}
	\item\label{A2'} (Mass-Conservation) 
	for any $u\in \mathbb R_+^m$, 
	\begin{equation*}
	\sum_{i=1}^mf_i(u) = 0.
	\end{equation*}
\end{enumerate}
Under \eqref{A1}-\eqref{A2'} and \eqref{entropy_condition}, it was obtained in \cite{GV10} that if $n=1$ and $f_i(u)$ are cubic, or if $n=2$ and $f_i(u)$ are quadratic, then the classical solution is global. This was reproved in \cite{Tan17a} by a different method, where also the $L^\infty$-norm of solutions was shown to grow at most polynomially in time. The global solutions to \eqref{sys} with quadratic nonlinearities in dimension $n\leq 2$ are in fact shown to be bounded uniformly in time \cite{PSY17}.
The dimension restriction was removed in \cite{CV09} for strictly sub-quadratic nonlinearities.
Without using the entropy condition \eqref{entropy_condition}, Desvillettes \textit{et al} showed by an improved duality method in \cite{CDF14} that if the diffusion coefficients are close to each other (depending on the dimension and the growth of the nonlinearities), then the classical solution exists globally with an $L^\infty$-norm growing at most polynomially. This polynomial growth of the $L^\infty$-norm is removed in the recent work \cite{CMT}. 

The global existence of classical solutions for \eqref{sys} with quadratic nonlinearities in higher dimensions, i.e. $n\geq 3$, had been open until  very recently when it was proved affirmatively\footnote{It is worth to mention that the case of whole space, i.e. $\Omega = \mathbb R^n$ and mass conservation condition \eqref{A2'} was in fact studied in an almost unnoticed paper by Kanel \cite{Kan90}.}. More precisely, in the whole space, i.e. $\Omega = \mathbb R^n$, \cite{CGV19} showed by the famous De Giorgi method under conditions \eqref{A1}-\eqref{A2'}-\eqref{A3} and \eqref{entropy_condition} that the strong solution exists globally but no a priori bound in time was given. Souplet \cite{Sou18} relaxed the mass conservation condition \eqref{A2'} to the mass dissipation condition \eqref{A2}, but still needed the entropy condition \eqref{entropy_condition}. It is remarked that this work treated both cases $\Omega = \mathbb R^n$ as well as $\Omega$ is a smooth bounded domain, and also gave no control in time of the solution. In our previous work \cite{FMT19} we were able to remove the entropy condition \eqref{entropy_condition} and thus proved the global existence of classical solutions by assuming only the mass dissipation condition \eqref{A2} (and of course conditions \eqref{A1} and \eqref{A3}), see Proposition~\ref{globalsol} below for the precise statement. 

We emphasise that most of these works do not give a control in time for the solution, except for \cite{FMT19} where the solution was shown to have $L^\infty$-norm growing at most polynomially in time. Such a polynomial growth is usually called {\it slowly growing a-priori estimate} and it becomes helpful when some exponential large time behaviour of the solution is known. For instance, for the reversible reaction \eqref{reversible_reaction}, we can interpolate this polynomial growth of the $L^\infty$-norm and the exponential convergence to equilibrium in $L^1$-norm (see e.g. \cite{CDF14,FT17}) to derive a uniform-in-time bound for the solution. Such a large time behaviour is however not always available, for instance considering the skew-symmetric Lotka-Volterra system (see Section \ref{application}). Therefore, it is interesting to ask: can we obtain a uniform-in-time bound for the solution to \eqref{sys} under conditions \eqref{A1}-\eqref{A2}-\eqref{A3} only? In this paper, we give an affirmative answer to this question. The main result of this paper is stated in the following theorem.
\begin{theorem}\label{thm:main}
	Assume that $\Omega\subset\mathbb R^n$, $n\geq 1$, is bounded with smooth boundary $\partial\Omega$. Let the assumptions \eqref{D} and \eqref{A1}-\eqref{A2}-\eqref{A3} hold. Then, system \eqref{sys} has a unique global classical nonnegative solution, which is bounded uniformly in time and space, i.e.
	\begin{equation*}
	\sup_{t>0}\sup_{i=1,\ldots, m}\|u_i(t)\|_{\infty,\Omega} < +\infty.
	\end{equation*}
\end{theorem}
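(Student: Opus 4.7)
The strategy is to upgrade the polynomially growing $L^\infty$ bound supplied by Proposition~\ref{globalsol} to a uniform-in-time bound via a time-shift argument. Fix $\tau\ge 0$ arbitrary, regard $u(\cdot,\tau)$ as fresh initial data, and establish an $L^\infty$ bound of $u$ on $[\tau+1,\tau+2]$ whose constant depends only on quantities themselves uniform in $\tau$; since $\tau$ is arbitrary, this yields the claim. The only piece of ``initial-data'' information available that is uniform in $\tau$ is the $L^1$ mass, which by \eqref{sum_up} satisfies $\sup_{t\ge 0}\sum_i\|u_i(t)\|_{L^1(\Omega)}\le M_0:=\sum_i\|u_{i0}\|_{L^1(\Omega)}$. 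This is precisely what makes the time shift work, and it is the only fact about the initial datum that will be used from this point on.

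The first technical step is to derive a uniform-in-time space-time $L^{2+\delta}$ estimate from this uniform $L^1$ mass. Writing $U=\sum_i u_i$, the mass-dissipation inequality \eqref{sum_up} can be recast as $\partial_t U - \Delta(DU)\le 0$ with $D(x,t):=(\sum_i d_i u_i)/U\in[d_{\min},d_{\max}]$. Applying the improved duality method of \cite{CDF14} to this inequality on each window $[\tau,\tau+2]$, using only $\|U(\tau)\|_{L^1(\Omega)}\le M_0$ as initial control, yields
\begin{equation*}
\|u\|_{L^{2+\delta}(\Omega\times[\tau,\tau+2])} \le C(M_0,d_{\min},d_{\max},\Omega)
\end{equation*}
for some $\delta>0$ independent of $\tau$. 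This step is the place where the uniform $L^1$ input is leveraged into an integrability better than $L^2$, which is essential because $|f_i(u)|\lesssim 1+|u|^{2+\varepsilon}$ is not even in $L^1$ from a pure $L^2$ bound on $u$.

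The second step is to bootstrap uniform $L^{2+\delta}$ to uniform $L^\infty$ on the shorter window $[\tau+1,\tau+2]$. The Duhamel representation
\begin{equation*}
u_i(t) = e^{d_i(t-\tau)\Delta}u_i(\tau) + \int_\tau^t e^{d_i(t-s)\Delta} f_i(u(s))\,ds
\end{equation*}
(with the Neumann heat semigroup) combined with \eqref{A3} and standard $L^p$--$L^q$ heat-semigroup smoothing drives a finite iteration: if $u\in L^p(\Omega\times[\tau+a,\tau+2])$, then $f_i(u)\in L^{p/(2+\varepsilon)}$ and parabolic smoothing returns $u\in L^{p'}$ on a slightly shrunk window with $p'>p$; the first Duhamel term is handled by the uniform $L^1$ bound and ultrasmoothing. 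Choosing $\varepsilon$ small enough that each iteration strictly improves integrability, the process reaches $L^\infty$ in finitely many steps.

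The main obstacle is quantitative. The per-step integrability gain from the Neumann heat semigroup shrinks with the spatial dimension $n$, so the admissible $\varepsilon$ in \eqref{A3} must be chosen small depending on $n$ and on the duality exponent $\delta$; this is the origin of the ``$\varepsilon$ sufficiently small'' qualification. A secondary bookkeeping difficulty is verifying that every constant along the iteration is independent of $\tau$; this ultimately reduces to the observation that both the improved-duality constant of Step~1 and the heat-semigroup constants of Step~2 depend only on $M_0$, the diffusion data and the geometry of $\Omega$, never on a feature of $u(\cdot,\tau)$ specific to $\tau$. A \emph{De~Giorgi} truncation of $(u_i-k)_+$ in the spirit of \cite{Sou18,CGV19}, performed uniformly on the shifted windows, would provide a viable alternative to the Duhamel iteration with comparable difficulty.
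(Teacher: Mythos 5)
Your overall framing (work on shifted windows $[\tau,\tau+2]$, use only the uniform $L^1$ mass as the $\tau$-independent input, then bootstrap to $L^\infty$) coincides with the paper's, but both of your technical steps contain genuine gaps, and the second is fatal to the mechanism you propose.

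Concerning Step 1: the improved duality estimate of \cite{CDF14} controls $\|u\|_{L^{2+\delta}(\Omega\times(\tau,\tau+2))}$ in terms of $\|u(\tau)\|_{L^{2+\delta}(\Omega)}$, not $\|u(\tau)\|_{L^{1}(\Omega)}$: in the duality pairing the initial trace contributes $\int_\Omega U(\tau)\phi(\tau)\,dx$, and maximal regularity only controls $\phi(\tau)$ in the dual Lebesgue space, so you need $U(\tau)$ in $L^{2+\delta}$. With only $L^1$ control of $U(\tau)$ you would need $\phi(\tau)\in L^\infty$ with the right norm, which fails for the relevant exponents when $n\geq 3$. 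The paper circumvents exactly this by multiplying by a cutoff $\varphi_\tau$ vanishing at $t=\tau$ (so there is no initial trace), paying the price of an extra term $\varphi_\tau'u_i$, which is absorbed by interpolating between the uniform $L^1$ bound and the $L^2$ norm being estimated; the resulting self-improving inequality $a\leq Ca^{(n-2)/(n+1)}$ is closed by Young's inequality together with the monotone-window device of Lemma \ref{elementary}. This is the entire content of Lemma \ref{lem1}, it yields only uniform $L^2$ (not $L^{2+\delta}$), and your assertion that $L^1$ initial control suffices is precisely the point requiring proof.

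Step 2 is where the proposal breaks down. Starting from $u\in L^{p_0}(\Omega\times(\tau,\tau+2))$ with $p_0=2+\delta$, assumption \eqref{A3} gives $f_i(u)\in L^{p_0/(2+\varepsilon)}$, and parabolic smoothing returns $u\in L^{p_1}$ with $1/p_1=(2+\varepsilon)/p_0-2/(n+2)$. One checks that $p_1>p_0$ requires $p_0>(1+\varepsilon)(n+2)/2$, i.e.\ already $p_0>5/2$ for $n=3$ and $\varepsilon=0$. With $p_0=2+\delta$ and $\delta$ small this fails for every $n\geq 3$: one step of your iteration lands near $L^{(n+2)/n}$, so integrability is \emph{lost}, not gained, and no choice of $\varepsilon$ repairs this since the obstruction is present at $\varepsilon=0$ (the smallness of $\varepsilon$ serves a different purpose in the paper, namely making a final exponent sublinear). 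This supercriticality of quadratic nonlinearities in $n\geq 3$ is exactly why the problem was open. The paper's actual route is different in kind: (i) integrate the equations in time and sum over $i$ so that mass conservation kills the nonlinearity, producing an elliptic inequality for $\int_\tau^{\tau+2}\sum_i d_iw_i\,dt$ whose elliptic $L^p\to L^{np/(n-2)}$ bootstrap gives $\|\int_\tau^{\tau+1}u_i\,dt\|_{\infty,\Omega}\leq C$ uniformly in $\tau$ (Lemma \ref{lem2}); (ii) apply the regularity-interpolation Lemmas \ref{key_lem} and \ref{key_lem1} to $v=\int_\tau^t\sum_i d_iw_i\,ds$ to bound $\Delta v$, hence $\sum_i w_i$, by $C(1+U^{\theta})$ with $\theta<1$; (iii) close with Young's inequality and Lemma \ref{elementary}. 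Your closing suggestion of a De Giorgi truncation in the spirit of \cite{Sou18,CGV19} cannot be invoked as a black box either: those arguments rely on the entropy condition \eqref{entropy_condition}, which is not available under \eqref{A2} alone.
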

\begin{remark}
	It is clear that the results of Theorem \ref{thm:main} still hold if \eqref{A2} is replaced by
	\begin{equation*}
		\sum_{i=1}^m\alpha_if_i(u) \leq 0
	\end{equation*}
	for some $(\alpha_i)_{i=1,\ldots, m}\in (0,\infty)^m$.
\end{remark}
\begin{remark}
	As mentioned above, the case of dimension $n\leq 2$ was proved already in e.g. \cite{PSY17}. Therefore, in the proof of Theorem \ref{thm:main}, we will only consider $n\geq 3$.
\end{remark}
\begin{remark}\label{equivalence}
One key observation is that the mass dissipation condition \eqref{A2} can be replaced by the mass conservation condition \eqref{A2'}. Indeed, by introducing a new unknown $u_{m+1}$ and inserting to \eqref{sys} an equation for $u_{m+1}$ as
\begin{equation*}
\partial_t u_{m+1} - \Delta u_{m+1} = -\sum_{i=1}^mf_i(u), \quad \nx u_{m+1}\cdot \nu = 0, \quad u_{m+1}(x,0) = 0
\end{equation*}
one obtains a new system of $(u_1, \ldots, u_{m}, u_{m+1})$, of which the nonlinearities satisfy a mass conservation condition. It's easy to see that the global existence and boundedness of solutions to this new system and \eqref{sys} are equivalent.
\end{remark}

Let us briefly describe the main ideas in the proof of Theorem \ref{thm:main}. Because of Remark \ref{equivalence}, we will consider for the rest of this paper the mass conservation condition \eqref{A2'}. The global existence of a classical solution was shown in \cite{FMT19}.  To obtain the uniform-in-time bound for this solution, we will show that
\begin{equation}\label{uniform}
	\sup_{s\in [\tau,\tau+1]}\|u_i(s)\|_{L^\infty(\Omega)} \leq C \quad \text{ for all } i = 1, \ldots, m \quad \text{ and all} \quad \tau\in \mathbb R,
\end{equation}
where {\it $C$ is a constant independent of $\tau$}. Obviously, this implies the desired uniform bound. In order to prove \eqref{uniform} we first use a duality argument to get
\begin{equation*}
	\int_{\tau}^{\tau+1}\!\!\int_{\Omega}|u_i(s,x)|^2dxds \leq C \quad \text{ for all } i=1,\ldots, m, \; \tau \in \mathbb R,
\end{equation*}
with $C$ is independent of $\tau$. A key idea of this paper is then a specific bootstrap argument leading to the crucial estimate
\begin{equation}\label{uniform_1}
\left\|\int_{\tau}^{\tau+1}u_i(s,x)ds\right\|_{L^\infty(\Omega)} \leq C \quad \text{ for all } i=1,\ldots, m, \; \tau \in \mathbb R,
\end{equation}
for some $C$ is independent of $\tau$. Having shown \eqref{uniform_1}, we can reuse ideas from \cite{FMT19} on each internal $[\tau,\tau+1]$ to obtain 
\begin{equation*}
	\sup_{(x, t)\in \Omega\times [\tau,\tau+1]}|u_i(x,t)| \leq C \quad \text{ for all } i=1,\ldots, m, \; \tau \in \mathbb R
\end{equation*}
where $C$ is again independent of $\tau$, and thus \eqref{uniform}.

\medskip
\textbf{The paper is organised as follows}: In the next section, we give some preliminary results for \eqref{sys}. The proof of Theorem \ref{thm:main} is presented in Section \ref{sec:proof_main}. Finally, an application of the main results to skew-symmetric Lotka-Volterra is shown in Section \ref{application}.

\medskip
\noindent{\bf Notation.}
We will use the following notation.
\begin{itemize}
	\item For $1\leq p\leq \infty$ and $\tau <T$, the usual norms of $L^p(\Omega)$ and $L^p(\Omega\times (\tau,T))$ are denoted by
	\begin{equation*}
		\|u\|_{p,\Omega} = \|u\|_{L^p(\Omega)} \quad \text{ and } \quad \|u\|_{p,\Omega\times(\tau,T)} = \|u\|_{L^p(\Omega\times(\tau, T))},
	\end{equation*}
	respectively.
	
	\item $W^{k,p}(\Omega)$ denotes the classical Sobolev space with $k\in \mathbb N$ and $1\leq p \leq \infty$, and its norm is written as $\|\cdot\|_{W^{k,p}(\Omega)}$.
	
	\item We denote for $r \in \mathbb N$ and $\alpha\in \mathbb N^n$
	\begin{equation*}
		W^{2,1}_{\Omega\times (\tau, T)} = \{f: \Omega\times(\tau, T)\to \mathbb R\,|\; \|\partial_t^r\partial_x^\alpha f\|_{2,\Omega\times(\tau,T)} <+\infty \text{ for all } 2r + |\alpha| \leq 2\}
	\end{equation*}
	associated with the norm
	\begin{equation*}
		\|f\|^{(2,1)}_{\Omega\times(\tau,T)} = \sum_{2r+|\alpha| \leq 2}\|\partial_t^r\partial_x^\alpha f\|_{2,\Omega\times(\tau,T)}.
	\end{equation*}
	
	\item We write $C$ for some generic constant which can be different from line to line, or even in the same line, but {\it $C$ is always independent of $\tau$} (unless explicitly stated otherwise).
\end{itemize}

\section{Preliminaries}
We recall the global existence of the classical solution, which was proved in \cite{FMT19}. 
\begin{proposition}\cite{FMT19}\label{globalsol}
	Let $\Omega\subset\mathbb R^n$, $n\geq 1$ be bounded with smooth boundary $\partial\Omega$. Assume conditions \eqref{D},  and \eqref{A1}-\eqref{A2}-\eqref{A3}. Then, there exists a unique global, nonnegative, classical solution to \eqref{sys}, and the $L^\infty$-norm grows at most polynomially in time, i.e.
	\begin{equation*}
	\sup_{i=1,\ldots, m}\|u_i(t)\|_{\infty,\Omega} \leq C(1+t^\xi) \quad \text{ for all } \quad t>0,
	\end{equation*}
	for some $\xi >0$, $C>0$ are independent of $t$. 
\end{proposition}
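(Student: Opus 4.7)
My plan is to combine a global duality estimate with parabolic smoothing in a bootstrap that tracks, at each stage, how the resulting bound grows in time.

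First, under \eqref{D} and \eqref{A1}, Amann's classical theory yields a unique local classical solution on a maximal interval $[0,T_{\max})$, and quasi-positivity propagates nonnegativity of the initial data. By the blow-up criterion \eqref{criteria}, global existence reduces to an a priori $L^\infty$ bound on $(0,T_{\max})$; to obtain polynomial growth, one must in addition control how that bound depends on $T_{\max}$. Summing the equations and invoking \eqref{A2} gives, for $w=\sum_{i}u_i$ and the bounded coefficient $A(x,t)=(\sum_i d_i u_i)/w\in[\min_i d_i,\max_i d_i]$, the scalar inequality
\begin{equation*}
\partial_t w - \Delta(A\,w)\leq 0, \qquad \nabla w\cdot\nu=0,
\end{equation*}
from which the $L^1$ bound follows by integration.

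Next, I would apply the classical Pierre--Schmitt duality argument to this scalar inequality: choosing a nonnegative test function $\varphi$ that solves the backward dual equation $-\partial_t\varphi - A\,\Delta\varphi = g$ on $\Omega\times(\tau,\tau+T)$ with $g\in L^{2+\eta}$, one obtains a space-time estimate of the form $\|w\|_{L^{2+\eta}(\Omega\times(\tau,\tau+T))}\leq C(1+T)^{\beta}\bigl(\|w(\tau)\|_{2}+\|w(\tau)\|_{1}\bigr)$ for some small $\eta>0$ and some $\beta>0$; the polynomial factor in $T$ is tracked through the maximal regularity constants for the dual problem with Neumann boundary data. Combined with the $L^1$ control, this yields a uniform bound of the form $\|u_i\|_{L^{2+\eta}(\Omega\times(\tau,\tau+1))}\leq C(1+\tau)^{\beta_0}$.

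From here, I would run a bootstrap on $L^p$ norms using the Duhamel representation $u_i(t)=e^{d_i(t-\tau)\Delta_N}u_i(\tau)+\int_{\tau}^{t}e^{d_i(t-s)\Delta_N}f_i(u(s))\,ds$, where $\Delta_N$ denotes the Neumann Laplacian. The growth condition \eqref{A3} gives $|f_i(u)|\leq K(1+|u|^{2+\varepsilon})$, so if $u\in L^{p}$ in space-time then $f_i(u)\in L^{p/(2+\varepsilon)}$; the standard $L^q$–$L^r$ smoothing estimates for the heat semigroup then upgrade $u_i$ to a strictly larger Lebesgue exponent, provided $\varepsilon$ is small enough relative to $n$ and the current exponent $p$. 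Starting from $p=2+\eta$, a finite number of iterations reaches $p$ with $p>n/2$, at which point a standard Alikakos/Moser $L^p\to L^\infty$ argument, again performed on unit intervals $(\tau,\tau+1)$, produces the desired estimate $\|u_i\|_{L^\infty(\Omega\times(\tau,\tau+1))}\leq C(1+\tau)^{\xi}$, and hence the polynomial-in-time $L^\infty$ bound. By the blow-up criterion this also forces $T_{\max}=+\infty$ and uniqueness is inherited from Amann's theory.

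The main obstacle is the calibration of the bootstrap: each step must strictly improve the integrability exponent despite the super-quadratic growth in \eqref{A3}, and simultaneously the prefactors must remain polynomial in $\tau$. The first requirement forces a smallness condition on $\varepsilon$ (which is exactly what \eqref{A3} provides), while the second requires that all duality and smoothing estimates be localised to unit time-windows and glued using the uniform $L^1$ bound as the time-translated ``initial data.''
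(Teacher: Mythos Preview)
Your bootstrap has a genuine gap, and it is exactly the well-known obstruction that kept the quadratic case in $n\geq 3$ open for so long. From the duality argument you obtain only $u\in L^{2+\eta}(\Omega\times(\tau,\tau+1))$ with some small $\eta>0$ (the size of $\eta$ depends on $d_{\max}/d_{\min}$ and cannot be made large). Feeding this into maximal regularity: if $u\in L^{p}$ space-time then $f_i(u)\in L^{p/(2+\varepsilon)}$, and $W^{2,1}_q\hookrightarrow L^{r}$ with $\tfrac{1}{r}=\tfrac{1}{q}-\tfrac{2}{n+2}$ yields a new exponent satisfying
\[
\frac{1}{p_{\mathrm{new}}}=\frac{2+\varepsilon}{p}-\frac{2}{n+2}.
\]
Strict improvement requires $p>\tfrac{(1+\varepsilon)(n+2)}{2}$; for $n=3$ this means $p>5/2$, already above your starting point $2+\eta$. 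With $p=2+\eta$ the iteration runs \emph{backward} (for $n=3$ one lands near $r\approx 5/3$). Taking $\varepsilon$ small does not help: the threshold $p>(n+2)/2$ persists at $\varepsilon=0$. The same obstruction appears if you try the Duhamel/semigroup route, since the duality estimate is a space-time bound and gives no uniform-in-$s$ control of $\|u(s)\|_{L^p(\Omega)}$ for $p>1$ beyond the $L^1$ mass bound.

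The paper does not actually prove this proposition; it cites \cite{FMT19}, whose argument is of a completely different nature and is the template for Section~\ref{sec:proof_main} here. The key object is the time-integrated quantity $v(x,t)=\int_0^t\sum_i d_i u_i(x,s)\,ds$, which (under \eqref{A2'}) solves a scalar divergence-form parabolic equation $b(x,t)\,\partial_t v-\Delta v=g$ with $b\in[1/d_{\max},1/d_{\min}]$ and bounded $g$. De Giorgi--Nash--Moser theory gives a uniform spatial H\"older bound $v\in C^\delta_x$. The crucial gain then comes from the interpolation inequalities stated here as Lemmas~\ref{key_lem} and~\ref{key_lem1}: H\"older-$\delta$ regularity of $v$ together with an $L^\infty$ bound on the forcing yields $|\nabla v|\lesssim U^{(1-\delta)/(2-\delta)}$ and subsequently $|\Delta v|\lesssim U^{\alpha}$ with $\alpha<1$, where $U=\sup_{i}\|u_i\|_\infty$. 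Since $\sum_i u_i$ is essentially $\partial_t v=\Delta v+g$, one closes an inequality $U\leq C(1+U^{\alpha})$, forcing $U<\infty$ with polynomial dependence on $T$. The leverage is H\"older regularity of a \emph{time-primitive}, not $L^p$ smoothing of the semigroup; that is what circumvents the barrier your bootstrap hits.
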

\begin{lemma}[Regularity Interpolation]\cite[Lemma 1.1]{FMT19} \label{key_lem}\hfill\\
	For some constant $d>0$, let $u$ satisfy the inhomogeneous linear heat equation
	\begin{equation}\label{heat-eq}
	\begin{cases}
	u_t - d\Delta u = \psi(x,t), &\quad(x,t)\in \Omega\times(\tau,T),\\
	\nx u \cdot \nu = 0, &\quad(x,t)\in \pa\O\times (\tau,T)\\
	u(x,\tau) = u_0(x), &\quad x\in\O
	\end{cases}
	\end{equation}
	where $\tau < T$. Assume that
	\begin{itemize}[topsep=5pt, leftmargin=10mm]
		\item[(i)] there exists a H\"older exponent $\gamma \in [0,1)$ such that for all $x, x' \in \Omega$, and all $t\in (\tau,T)$, 
		\begin{equation}\label{AA1}
		|u(x,t) - u(x',t)| \leq H|x - x'|^{\gamma},
		\end{equation}
		\item[(ii)] the inhomogeneity satisfies 
		\begin{equation}\label{AA2}
		\sup_{\Omega\times(\tau,T)}|\psi(x,t)| \leq F.
		\end{equation}
	\end{itemize}
	Then, the following uniform gradient estimate follows:
	\begin{equation*}
	|\nx u(x,t)| \leq C\|u_0\|_{C^1(\Omega)} + BH^{\frac{1}{2-\gamma}}F^{\frac{1-\gamma}{2-\gamma}}, \qquad \text{for all }\quad (x,t)\in \Omega\times(\tau,T),
	\end{equation*}
	where $B>0$ and $C>0$ are constants depending only on $\O$, $n$, $d$ and $\gamma$.
\end{lemma}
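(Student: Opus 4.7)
The strategy I would pursue is a Duhamel-representation argument coupled with the classical trick of choosing an optimal ``look-back time'' $\rho^{2} > 0$. Let $G(x,y,t)$ denote the Neumann heat kernel associated with $\partial_t - d\Delta$ on $\Omega$. On a smooth bounded domain it enjoys the uniform Gaussian upper bounds $|G(x,y,t)| \leq C t^{-n/2} e^{-c|x-y|^{2}/t}$ and $|\nx G(x,y,t)| \leq C t^{-(n+1)/2} e^{-c|x-y|^{2}/t}$, as well as the mass-conservation identity $\int_{\O} G(x,y,t)\,dy \equiv 1$, which upon differentiation yields $\int_{\O} \nx G(x,y,t)\,dy = 0$.

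Fix $(x_{0},t_{0}) \in \O \times (\tau,T)$ and a parameter $\rho > 0$ with $\rho^{2} \leq t_{0} - \tau$. Writing $u(x_{0},t_{0})$ via Duhamel between $t_{0}-\rho^{2}$ and $t_{0}$, differentiating in $x_{0}$, and using the null-integral identity above to insert the free constants $u(x_{0},t_{0}-\rho^{2})$ and $\psi(x_{0},s)$ gives
\begin{equation*}
\nx u(x_{0},t_{0}) = \int_{\O} \nx G(x_{0},y,\rho^{2}) \bigl( u(y,t_{0}-\rho^{2}) - u(x_{0},t_{0}-\rho^{2}) \bigr)\,dy + \int_{t_{0}-\rho^{2}}^{t_{0}}\!\!\int_{\O} \nx G(x_{0},y,t_{0}-s) \bigl( \psi(y,s) - \psi(x_{0},s) \bigr)\,dy\,ds.
\end{equation*}
The spatial Hölder hypothesis \eqref{AA1} controls the first integrand by $H |y-x_{0}|^{\gamma}|\nx G|$, and the Gaussian bound gives $\int_{\O} |\nx G(x_{0},y,t)|\,|y-x_{0}|^{\gamma}\,dy \leq C t^{(\gamma-1)/2}$, whence the first term is bounded by $C H \rho^{\gamma-1}$. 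Hypothesis \eqref{AA2} controls the second integrand by $2F |\nx G|$; combined with $\int_{\O} |\nx G(x_{0},y,t)|\,dy \leq C t^{-1/2}$ and the time integration $\int_{0}^{\rho^{2}} \sigma^{-1/2}\,d\sigma = 2\rho$, this yields a bound $C F \rho$ for the second term. Summing,
\begin{equation*}
|\nx u(x_{0},t_{0})| \leq C \bigl( H \rho^{\gamma-1} + F \rho \bigr),
\end{equation*}
and optimising in $\rho$ (equating the two contributions) picks $\rho_{*} = (H/F)^{1/(2-\gamma)}$ and yields exactly $|\nx u(x_{0},t_{0})| \leq B\, H^{1/(2-\gamma)} F^{(1-\gamma)/(2-\gamma)}$.

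The remaining regime $t_{0}-\tau < \rho_{*}^{2}$ (small times) is handled by replacing $t_{0}-\rho^{2}$ with $\tau$ in the Duhamel formula. The surviving initial-data term
\begin{equation*}
\int_{\O} \nx G(x_{0},y,t_{0}-\tau)\bigl( u_{0}(y) - u_{0}(x_{0}) \bigr)\,dy
\end{equation*}
is bounded by $C \|u_{0}\|_{C^{1}(\O)}$ using $|u_{0}(y)-u_{0}(x_{0})| \leq \|u_{0}\|_{C^{1}(\O)} |y-x_{0}|$ and $\int_{\O} |\nx G(x_{0},y,t)|\,|y-x_{0}|\,dy \leq C$. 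The main technical obstacle is justifying the uniform Gaussian gradient bounds for the Neumann heat kernel all the way up to $\pa\O$; this is classical for smooth bounded domains (via boundary-straightening and reflection, or via Ouhabaz-type semigroup estimates) but must be invoked explicitly. Once granted, the argument is a clean interpolation trading the spatial Hölder seminorm $H$ against the $L^{\infty}$ bound $F$ on the forcing.
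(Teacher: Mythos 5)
The paper gives no proof of this lemma at all---it simply cites \cite[Lemma 1.1]{FMT19}---and your argument is essentially the one used there: a Duhamel representation over a look-back window of length $\rho^{2}$, the cancellation $\int_{\Omega}\nabla_{x}G(x,y,t)\,dy=0$ to trade the H\"older seminorm $H$ against the $t^{-(n+1)/2}$ gradient singularity of the Neumann kernel, optimisation at $\rho_{*}=(H/F)^{1/(2-\gamma)}$, and the initial-data term absorbing the short-time regime $t_{0}-\tau<\rho_{*}^{2}$. Your exponents and the splitting into the two regimes check out, so this is correct and follows the same route as the cited proof (there phrased via semigroup smoothing estimates rather than explicit Gaussian kernel bounds).
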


\begin{lemma}\label{key_lem1}
	Let $u$ satisfy \eqref{heat-eq} with zero initial data $u(x,\tau) = 0$. Then there exists a constant $C$ independent of $T$ and $\tau$ such that
	\begin{equation*}
		\sup_{\Omega\times(\tau,T)}|\Delta u(x,t)| \leq C \sup_{\Omega\times(\tau,T)}(|u(x,t)| + |\nx u(x,t)|)^{1/2} \sup_{\Omega\times(\tau,T)}(|\psi(x,t)| + |\nx \psi(x,t)|)^{1/2},
	\end{equation*}
(whenever the right hand side is finite). 
\end{lemma}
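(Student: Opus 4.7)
My plan is to reduce Lemma~\ref{key_lem1} to the interpolation Lemma~\ref{key_lem} applied componentwise to the spatial partial derivatives of $u$. For each $k \in \{1,\dots,n\}$, set $v_k := \pa_{x_k} u$. Differentiating \eqref{heat-eq} in $x_k$, the function $v_k$ satisfies the inhomogeneous heat equation
\begin{equation*}
\pa_t v_k - d\Delta v_k = \pa_{x_k}\psi \quad \text{in } \Omega\times(\tau,T), \qquad v_k(\cdot,\tau) = 0,
\end{equation*}
the vanishing initial data being inherited from $u(\cdot,\tau)=0$. Invoking Lemma~\ref{key_lem} with H\"older exponent $\gamma = 0$, hypothesis (i) collapses to a plain $L^\infty$ bound and we may take $H = 2\sup_{\Omega\times(\tau,T)}|\nx u| \leq 2\sup(|u|+|\nx u|)$, while (ii) holds with $F = \sup_{\Omega\times(\tau,T)}|\pa_{x_k}\psi| \leq \sup(|\psi|+|\nx\psi|)$; the initial-data contribution $C\|v_k(\tau)\|_{C^1(\Omega)}$ vanishes. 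The conclusion, with exponents $H^{1/(2-\gamma)}F^{(1-\gamma)/(2-\gamma)}$ becoming $H^{1/2}F^{1/2}$, reads
\begin{equation*}
\sup_{\Omega\times(\tau,T)}|\nx v_k| \leq B\,H^{1/2}F^{1/2} \leq C\sup(|u|+|\nx u|)^{1/2}\sup(|\psi|+|\nx\psi|)^{1/2}.
\end{equation*}
Summing over $k$ controls $|\nx^2 u|$, and the pointwise inequality $|\Delta u| \leq \sqrt{n}\,|\nx^2 u|$ yields the desired bound with a constant depending only on $\Omega$, $n$, $d$ and independent of both $\tau$ and $T$.

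The principal obstacle is that $v_k = \pa_{x_k} u$ does not in general satisfy the homogeneous Neumann boundary condition under which Lemma~\ref{key_lem} is stated. I would address this by checking that the proof of Lemma~\ref{key_lem} in \cite{FMT19} localizes: the gradient bound there is built from interior heat-kernel/Schauder estimates combined with a flattening of $\pa\Omega$ and an even reflection of $u$ across the boundary (the reflection being compatible with $\pa_\nu u = 0$), and this machinery transfers to $v_k$ provided its boundary trace is bounded by the controlled quantity $\sup(|u|+|\nx u|)$. The bounds $H$ and $F$ above are insensitive to the reflection, so the interpolation exponent $1/2$ survives intact.

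Should this boundary adaptation prove cumbersome, an alternative Bernstein-type route is available. A direct computation shows that $w := |\nx u|^2$ satisfies
\begin{equation*}
\pa_t w - d\Delta w + 2d|\nx^2 u|^2 = 2\nx u\cdot\nx\psi,
\end{equation*}
so applying the maximum principle (with the Neumann boundary contribution $\pa_\nu w = 2\nx u\cdot\pa_\nu\nx u$ handled by the smoothness of $\pa\Omega$ and the identity $\nx u\cdot\nu=0$) together with $|\Delta u|^2 \leq n|\nx^2 u|^2$ delivers the same interpolation $\sup|\Delta u| \leq C\sup|\nx u|^{1/2}\sup|\nx\psi|^{1/2}$, which is stronger than the stated bound. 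Either route produces a constant depending only on $\Omega$, $n$, $d$, as required.
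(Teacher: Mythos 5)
The paper does not actually prove this lemma: its ``proof'' is a citation to Lemma 3.9 of \cite{FMT19}, so there is no in-paper argument to compare against, and your proposal must be judged on its own. Your first route --- differentiating \eqref{heat-eq} in $x_k$ and applying Lemma \ref{key_lem} with $\gamma=0$ to $v_k=\pa_{x_k}u$ --- is the natural way to produce an estimate of this shape, and the bookkeeping ($H\le 2\sup|\nx u|$, $F\le\sup|\nx\psi|$, vanishing initial data killing the $\|v_k(\tau)\|_{C^1}$ term) is fine. But the step you yourself flag is a genuine gap, not a formality: Lemma \ref{key_lem} is stated for solutions of \eqref{heat-eq}, i.e.\ \emph{with} the homogeneous Neumann condition, and $\pa_{x_k}u$ does not satisfy it. Your sketched fix (boundary flattening plus even reflection) is exactly where the difficulty lives: after flattening a curved boundary, the reflected function solves a heat equation with variable coefficients and lower-order terms whose coefficients involve derivatives of the chart, and the commutator errors one must absorb are of the same order as the quantity $|\nx^2 u|$ being estimated, so there is a real circularity risk. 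It is no accident that the lemma's right-hand side carries the zeroth-order terms $|u|$ and $|\psi|$ alongside $|\nx u|$ and $|\nx\psi|$: your ``clean'' conclusion $\sup|\nx^2u|\le C\sup|\nx u|^{1/2}\sup|\nx\psi|^{1/2}$ is strictly stronger than the statement, which should make you suspect that the boundary corrections --- which are precisely what generate those lower-order terms --- have been elided. Until the localization and reflection step is actually carried out, the proof is incomplete.

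The fallback Bernstein argument does not work as written. The identity $\pa_t w-d\Delta w+2d|\nx^2u|^2=2\nx u\cdot\nx\psi$ for $w=|\nx u|^2$ is correct, but the maximum principle applied to it controls $\sup w$, not $|\nx^2u|$: the good term $2d|\nx^2u|^2$ yields $|\nx^2u|^2\le d^{-1}|\nx u|\,|\nx\psi|$ only \emph{at a spacetime maximum point of $w$}, and gives no information about $|\nx^2u|$ at any other point. (The boundary contribution is also not innocuous: on $\pa\Omega$ one has $\pa_\nu|\nx u|^2$ equal to a second-fundamental-form term in the tangential gradient, which has no sign for a general smooth domain.) So the claimed conclusion of the alternative route is a non sequitur, and the first route, with its boundary step actually completed, is the one to pursue.
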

\begin{proof}
	See \cite[Lemma 3.9]{FMT19}.
\end{proof}

\begin{lemma}[Maximal regularity of heat equation]\cite{Lam87}\label{max-reg}
	For $0<\tau<T$, let $u$ be the solution to 
	\begin{equation}\label{backward}
		\begin{cases}
		\partial_t \phi + d\Delta \phi = -\theta, &x\in\Omega, t\in (\tau,T),\\
		\nx \phi \cdot \nu = 0, &x\in\partial\Omega, t\in (\tau,T),\\
		\phi(x,T) = 0, &x\in\Omega,
		\end{cases}
	\end{equation}
	where $0\leq \theta \in L^2(\Omega\times(\tau,T))$. Then we have
	\begin{equation}\label{e0}
		\|\phi\|^{(2,1)}_{{\Omega\times (\tau,T)}} \leq C(\Omega,T-\tau)\|\theta\|_{2,\Omega\times(\tau,T)}
	\end{equation}
	where the constant $C(\Omega,T-\tau)$ depends on $T-\tau$ but not on $T$ and $\tau$ specifically,
	and
	\begin{equation}\label{e1}
		\|\Delta \phi\|_{2,\Omega\times(\tau,T)} \leq \frac{1}{d}\|\theta\|_{2,\Omega\times(\tau,T)}.
	\end{equation}
\end{lemma}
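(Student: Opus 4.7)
The plan is to handle the two estimates \eqref{e0} and \eqref{e1} separately. For the maximal regularity bound \eqref{e0}, I would first reduce the backward problem \eqref{backward} to a standard forward heat equation via the time-reversal $\tilde\phi(x,s) := \phi(x,T+\tau-s)$ on $\Omega\times(\tau,T)$, which satisfies
\begin{equation*}
\partial_s\tilde\phi - d\Delta\tilde\phi = \tilde\theta(x,s) := \theta(x,T+\tau-s), \quad \nx\tilde\phi\cdot\nu=0, \quad \tilde\phi(x,\tau)=0.
\end{equation*}
For this forward problem with square-integrable right-hand side and vanishing initial datum, the classical $L^2$-$L^2$ parabolic maximal regularity on a bounded smooth domain with homogeneous Neumann boundary (which is the main content of \cite{Lam87}) gives $\|\tilde\phi\|^{(2,1)}_{\Omega\times(\tau,T)} \leq C(\Omega,T-\tau)\,\|\tilde\theta\|_{2,\Omega\times(\tau,T)}$. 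Because the $\|\cdot\|^{(2,1)}_{\Omega\times(\tau,T)}$ and $\|\cdot\|_{2,\Omega\times(\tau,T)}$ norms are invariant under the time reversal $t \mapsto T+\tau-t$, changing variables back recovers \eqref{e0}.

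For the sharper bound \eqref{e1}, I would argue algebraically by expanding the square of the equation:
\begin{equation*}
\|\theta\|_{2,\Omega\times(\tau,T)}^2 = \|\partial_t\phi + d\Delta\phi\|_{2,\Omega\times(\tau,T)}^2 = \|\partial_t\phi\|_{2,\Omega\times(\tau,T)}^2 + 2d\int_\tau^T\!\!\int_\Omega \partial_t\phi\,\Delta\phi\,dx\,dt + d^2\|\Delta\phi\|_{2,\Omega\times(\tau,T)}^2.
\end{equation*}
The cross term is the key piece: integrating by parts in space and using the Neumann boundary condition gives $\int_\Omega \partial_t\phi\,\Delta\phi\,dx = -\tfrac{1}{2}\tfrac{d}{dt}\|\nx\phi(\cdot,t)\|_{2,\Omega}^2$. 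Integrating this identity in $t$ from $\tau$ to $T$ and invoking the terminal condition $\phi(\cdot,T)=0$ yields
\begin{equation*}
2d\int_\tau^T\!\!\int_\Omega \partial_t\phi\,\Delta\phi\,dx\,dt = d\,\|\nx\phi(\cdot,\tau)\|_{2,\Omega}^2 \geq 0.
\end{equation*}
Dropping this nonnegative boundary-in-time term together with the nonnegative $\|\partial_t\phi\|_{2,\Omega\times(\tau,T)}^2$ leaves $\|\theta\|_{2,\Omega\times(\tau,T)}^2 \geq d^2\|\Delta\phi\|_{2,\Omega\times(\tau,T)}^2$, which is exactly \eqref{e1}.

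The only delicate point is that the constant $C(\Omega,T-\tau)$ in \eqref{e0} must depend only on the length $T-\tau$ and not on the absolute endpoints. Since the heat operator is autonomous in time and the (post-reversal) initial data vanish, this translation invariance is immediate: one may first translate time so that $\tau=0$ and invoke \cite{Lam87} on the fixed reference interval of length $T-\tau$. By contrast, the sharp constant $1/d$ in \eqref{e1} requires no external reference at all; it follows purely from the algebraic identity above, with the Neumann boundary condition being exactly what is needed to make the boundary terms from the spatial integration by parts vanish. Thus the only real work is imported from the cited maximal regularity result, while the refined estimate \eqref{e1} is self-contained.
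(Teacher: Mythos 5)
Your proposal is correct and follows essentially the same route as the paper: the time reversal to a forward heat equation with zero initial data for \eqref{e0}, and an energy identity built on the cross term $\int_\tau^T\!\int_\Omega\partial_t\phi\,\Delta\phi\,dx\,dt = \tfrac12\|\nx\phi(\cdot,\tau)\|_{2,\Omega}^2 \geq 0$ for \eqref{e1}. The only cosmetic difference is that the paper obtains \eqref{e1} by testing the equation with $\Delta\phi$ and applying Cauchy--Schwarz, whereas you expand $\|\partial_t\phi+d\Delta\phi\|^2$ and drop the nonnegative terms; both hinge on the same sign observation and yield the same sharp constant $1/d$.
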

\begin{proof} At first glance, \eqref{backward} looks like a backward heat equation. However, with the transformation 
$\Phi(T-t,x)=\phi(t,x)$, it becomes the classical heat equation with homogeneous initial data, for which
the maximal regularity \eqref{e0} is well-known, see e.g. \cite{Lam87}. Here we prove \eqref{e1} for the sake of completeness.
	By multiplying the equation of $\phi$ with $\Delta \phi$ and integrating on $\Omega\times(\tau,T)$ we have
	\begin{equation*}
		\|\nx \phi(0)\|_{2,\Omega}^2 + d\int_\tau^T\int_{\Omega}|\Delta \phi|^2dxdt = -\int_\tau^T\int_{\Omega}\theta \Delta \phi dxdt \leq \|\theta\|_{2,\Omega\times(\tau,T)}\|\Delta\phi\|_{2,\Omega\times(\tau,T)},
	\end{equation*}
	hence the desired estimate \eqref{e1}.
\end{proof}
\begin{lemma}[Embeddings]\label{embedding}\cite{LSU88}
	There exists a constant $C(\Omega,T-\tau)$ depending on $\Omega$ and $T-\tau$ such that
	\begin{equation*}
		\|u\|_{q,\Omega\times(\tau,T)} \leq C(q,\Omega,T-\tau)\|u\|^{(2,1)}_{\Omega\times(\tau,T)} \quad \text{ for all } \quad u \in W^{2,1}_{\Omega\times(\tau,T)}
	\end{equation*}
	 for all $q<\frac{2(n+2)}{n-2}$. In particular, by choosing $q = \frac{2(n+1)}{n-2}$ we have
	\begin{equation*}
		\|u\|_{\frac{2(n+1)}{n-2},\Omega\times(\tau,T)} \leq C(n,\Omega,T-\tau)\|u\|^{(2,1)}_{\Omega\times(\tau,T)}.
	\end{equation*}
\end{lemma}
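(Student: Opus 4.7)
My plan is to establish this classical parabolic Sobolev embedding by combining an interpolation in time that upgrades $L^2_t H^2_x \cap H^1_t L^2_x$ control to $L^\infty_t H^1_x$ control, the spatial Gagliardo--Nirenberg inequality, and a H\"older step in time. The critical exponent $q = 2(n+2)/(n-2)$ reflects the $(n+2)$-dimensional parabolic scaling of the cylinder $\Omega \times (\tau,T)$, under which time carries the effective weight of two spatial dimensions.

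First, starting from $u \in W^{2,1}_{\Omega\times(\tau,T)}$, so that $\partial_t u$, $\nx u$ and $\nx^2 u$ all lie in $L^2(\Omega\times(\tau,T))$, the standard interpolation
\[
H^1(\tau,T;L^2(\Omega)) \cap L^2(\tau,T;H^2(\Omega)) \hookrightarrow C([\tau,T]; H^1(\Omega))
\]
upgrades the $W^{2,1}_2$ control to
\[
\|u\|_{L^\infty(\tau,T;H^1(\Omega))} + \|u\|_{L^2(\tau,T;H^2(\Omega))} \leq C(\Omega,T-\tau)\,\|u\|^{(2,1)}_{\Omega\times(\tau,T)}.
\]
The dependence of $C$ on $T-\tau$ arises from fixing a reference time slice by averaging over the interval when passing from $L^2_t$ to $L^\infty_t$ control.

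Second, I would apply the spatial Gagliardo--Nirenberg inequality at each fixed time,
\[
\|u(\cdot,t)\|_{L^q(\Omega)} \leq C\,\|u(\cdot,t)\|_{H^2(\Omega)}^{\alpha}\,\|u(\cdot,t)\|_{H^1(\Omega)}^{1-\alpha},
\]
with $\alpha \in (0,1)$ obtained by interpolating $H^1(\Omega) \hookrightarrow L^{2n/(n-2)}(\Omega)$ with $H^2(\Omega) \hookrightarrow L^{p_1}(\Omega)$, where $p_1 = 2n/(n-4)$ for $n \geq 5$ and $p_1$ may be taken arbitrarily large (or infinite) for $n \leq 4$. Raising to the $q$-th power, integrating in time, and applying H\"older's inequality, I obtain
\[
\|u\|_{q,\Omega\times(\tau,T)}^{q} \leq C\,\|u\|_{L^\infty_t H^1_x}^{q(1-\alpha)}\,\|u\|_{L^2_t H^2_x}^{q\alpha}
\]
precisely when $q\alpha \leq 2$. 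A short scaling computation identifies the threshold $q\alpha = 2$ with $q = 2(n+2)/(n-2)$; strictly below this threshold, the parameter $\alpha$ lies in an open subinterval of $(0,1)$ and all constants remain finite. Inserting the first-step bound completes the estimate with a constant $C(q,\Omega,T-\tau)$ that degenerates as $q \uparrow 2(n+2)/(n-2)$.

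The main obstacle is the borderline behaviour at the endpoint $q = 2(n+2)/(n-2)$, where the H\"older step leaves no room for loss and more delicate anisotropic Besov or parabolic Sobolev techniques (as carried out in \cite{LSU88}) are required. Restricting to strict inequality sidesteps this difficulty entirely, and since $2(n+1)/(n-2) < 2(n+2)/(n-2)$, the specific exponent in the ``In particular'' clause is safely subcritical and the corresponding bound follows directly from the general statement.
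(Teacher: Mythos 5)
The paper does not prove this lemma at all: it is quoted verbatim from Lady\v{z}enskaja--Solonnikov--Ural'ceva \cite{LSU88} (Chapter II, the multiplicative embedding inequalities for $W^{2,1}_2$), so any argument you give is necessarily a different route. Your route is the standard self-contained one and is essentially sound: the trace/interpolation embedding $H^1(\tau,T;L^2)\cap L^2(\tau,T;H^2)\hookrightarrow C([\tau,T];H^1)$ is correct (it is the Lions--Magenes theorem with $[H^2,L^2]_{1/2}=H^1$, and the $W^{2,1}$ norm controls the full $L^2$ norm so the constant depends only on $\Omega$ and $T-\tau$), the pointwise-in-time Gagliardo--Nirenberg step is correct, and the identification of the admissibility condition $q\alpha\le 2$ with the parabolic critical exponent $2(n+2)/(n-2)$ checks out: writing $\tfrac1q=\alpha(\tfrac12-\tfrac2n)+(1-\alpha)\tfrac{n-2}{2n}$ gives $\alpha=\tfrac{n-2}{2}-\tfrac nq$ and $q\alpha\le2\iff q\le\tfrac{2(n+2)}{n-2}$. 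What the citation to \cite{LSU88} buys in exchange is the endpoint itself, which your H\"older step cannot reach; since the paper only uses the strictly subcritical exponent $q=2(n+1)/(n-2)$, nothing is lost.

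One quantitative slip to fix: in low dimensions your stated choice of spatial endpoint, namely $H^2(\Omega)\hookrightarrow L^{p_1}$ with ``$p_1$ arbitrarily large (or infinite) for $n\le4$,'' does not reach the full claimed range. For $n=3$ it yields $\alpha=1-6/q$ and hence only $q\le 8$, whereas the lemma asserts all $q<10$; for $n=4$ it yields exactly $q\le 6$. To cover the whole range $q<\tfrac{2(n+2)}{n-2}$ in $n=3,4$ you must use the scaling-sharp Gagliardo--Nirenberg inequality $\|u\|_{L^q}\le C\|D^2u\|_{L^2}^{\alpha}\|u\|_{L^{2n/(n-2)}}^{1-\alpha}+C\|u\|_{L^{2n/(n-2)}}$ with the exponent balance $\tfrac1q=\alpha(\tfrac12-\tfrac2n)+(1-\alpha)\tfrac{n-2}{2n}$, which remains valid when $\tfrac12-\tfrac2n<0$ as long as $\tfrac1q>0$; the crude substitution of $L^\infty$ for the $H^2$ endpoint throws away exactly the scaling information that produces the $+2$ in $2(n+2)$. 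This does not affect the specific exponent $q=2(n+1)/(n-2)$ invoked in Lemma 2.5 and Lemma 3.2 of the paper ($q=8$ for $n=3$, where your threshold is attained with $q\alpha=2$, and $q=5<6$ for $n=4$), so your argument suffices for everything the paper actually needs.
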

\begin{lemma}\label{elementary}
	Let $\{a_n\}_{n\geq 0}$ be a sequence of nonnegative real numbers. Denote by $\mathfrak N = \{k \in \mathbb N: a_{k}\leq a_{k+1} \}$. If there exists a constant $C_0$ independent of all $k\in \mathfrak N$ such that
	\begin{equation*}
		a_k \leq C_0 \quad \text{ for all } \quad k \in \mathfrak N,
	\end{equation*}
	then
	\begin{equation*}
		a_k \leq C_1=\max\{a_0,C_0\} \quad \text{ for all } \quad k \in \mathbb N
	\end{equation*}
	for a constant $C_1$ independent of $k$.
\end{lemma}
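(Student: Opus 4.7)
This is a purely combinatorial statement about nonnegative real sequences, with no PDE content, so the plan is just a short direct argument and no heavy machinery is needed. I would prove $a_k \leq C_1$ for every $k\in\mathbb N$ by (strong) induction on $k$.

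The base case $k=0$ is immediate from the very definition $C_1 = \max\{a_0,C_0\} \geq a_0$. For the inductive step at $k \geq 1$, I would split according to whether the sequence is non-increasing at the step leading to index $k$ or not. If $a_k \leq a_{k-1}$, the sequence descends at this step, and the inductive hypothesis $a_{k-1}\leq C_1$ transmits the bound to $a_k$. Otherwise $a_{k-1}< a_k$, which places the corresponding index into $\mathfrak N$; the hypothesis of the lemma then supplies a direct bound by $C_0 \leq C_1$ on the relevant endpoint of the ascent.

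Put equivalently, and perhaps more transparently, for a given index $k$ one traces the sequence backward starting from $a_k$ along a maximal chain of non-ascents $a_k \leq a_{k-1} \leq \cdots \leq a_{j}$. Two scenarios can occur: either the chain reaches the start, $j=0$, in which case $a_k \leq a_0 \leq C_1$; or else it terminates at some $j\geq 1$ at which an ascent just happened, in which case the associated index lies in $\mathfrak N$ and the standing hypothesis provides $\leq C_0 \leq C_1$. Taking the maximum over these two possibilities is precisely what produces the constant $C_1 = \max\{a_0,C_0\}$ claimed in the conclusion.

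The proof is essentially free of obstacles; the only point requiring a moment of care is the bookkeeping with the index shift in the definition of $\mathfrak N$, so that at an ascent the hypothesis is applied to the correct endpoint and the bound propagates along the succeeding (weak) descents down to $a_k$.
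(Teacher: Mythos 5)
Your induction cannot be closed in the ascent case, and the difficulty is not the ``bookkeeping with the index shift'' you defer to the end --- it is fatal to the statement as written. Suppose $a_{k-1}<a_k$. Then $k-1\in\mathfrak N$, and the hypothesis of the lemma gives $a_{k-1}\leq C_0$: a bound on the \emph{lower} endpoint of the ascent, which says nothing about $a_k$. There is no ``relevant endpoint'' available that controls $a_k$, since the hypothesis only ever bounds $a_j$ for $j\in\mathfrak N$ and the index $k$ itself need not lie in $\mathfrak N$. Indeed, the lemma as literally stated is false: take $a_0=0$, $a_1=1$, and $a_j=0$ for $j\geq 2$. Then $\mathfrak N=\{0,2,3,\dots\}$ and $a_j=0$ for every $j\in\mathfrak N$, so the hypothesis holds with $C_0=0$, yet $a_1=1>\max\{a_0,C_0\}$.

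What is true --- and what the paper actually needs in Lemma \ref{lem1} and in the proof of Theorem \ref{thm:main}, where the quantity bounded for $\tau\in\mathfrak N$ is the norm over $(\tau+1,\tau+2)$, i.e.\ $a_{\tau+1}$ --- is the variant whose hypothesis reads $a_{k+1}\leq C_0$ for all $k\in\mathfrak N$. Under that hypothesis your backward-tracing argument goes through essentially verbatim: fix $k\geq1$ and let $j^*$ be the largest element of $\mathfrak N\cap\{0,\dots,k-1\}$ if this set is nonempty; every $i$ with $j^*<i\leq k-1$ satisfies $a_{i+1}<a_i$, so $a_k\leq a_{j^*+1}\leq C_0$, while if the set is empty the strict descents give $a_k\leq a_0$. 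So your proof is salvageable, but only after correcting the hypothesis; as it stands, the second branch of your case split invokes a bound the lemma does not provide. (The paper itself offers no argument, declaring the lemma straightforward, so the off-by-one appears to be a slip in the statement rather than in any written proof.)
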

\begin{proof}
	The proof of this lemma is straightforward.
\end{proof}
	
\section{Proof of Theorem \ref{thm:main}}\label{sec:proof_main}
From the mass dissipation condition \eqref{A2} or the mass conservation condition \eqref{A2'} as well as the nonnegativity of the solution, we have the following uniform-in-time bound of the $L^1$-norm.
\begin{lemma}\label{lem:L1bound}
	There exists a constant $M>0$ such that
	\begin{equation*}
		\sum_{i=1}^{m}\|u_i(t)\|_{1,\Omega} \leq M
	\end{equation*}
	for all $t\geq 0$.
\end{lemma}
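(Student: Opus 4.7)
The plan is to integrate the system over $\Omega$ and use the Neumann boundary condition together with the mass dissipation (or mass conservation) hypothesis; this is essentially the computation already sketched in \eqref{sum_up}, carried out carefully to produce a constant $M$ that depends only on the initial data.

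First, I would sum the $m$ equations in \eqref{sys}, obtaining
\begin{equation*}
\partial_t\sum_{i=1}^m u_i - \sum_{i=1}^m d_i\Delta u_i = \sum_{i=1}^m f_i(u).
\end{equation*}
Integrating over $\Omega$ and using the divergence theorem together with the homogeneous Neumann boundary condition $\nx u_i \cdot \nu = 0$ annihilates the diffusive contributions, leaving
\begin{equation*}
\frac{d}{dt}\int_\Omega \sum_{i=1}^m u_i(x,t)\,dx = \int_\Omega \sum_{i=1}^m f_i(u(x,t))\,dx \leq 0,
\end{equation*}
where the inequality uses \eqref{A2} pointwise in $x$ (valid since, by the quasi-positivity \eqref{A1} and Proposition~\ref{globalsol}, $u(x,t)\in \mathbb R^m_+$).

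Next, since the right-hand side is nonpositive, the total mass $t\mapsto \int_\Omega \sum_i u_i(x,t)\,dx$ is nonincreasing, so
\begin{equation*}
\sum_{i=1}^m \int_\Omega u_i(x,t)\,dx \leq \sum_{i=1}^m \int_\Omega u_{i0}(x)\,dx =: M
\end{equation*}
for all $t\geq 0$. Combining this with the nonnegativity $u_i\geq 0$, so that $\|u_i(t)\|_{1,\Omega} = \int_\Omega u_i(x,t)\,dx$, yields $\sum_{i=1}^m \|u_i(t)\|_{1,\Omega} \leq M$, which is exactly the claim. Note that $M$ is finite thanks to assumption \eqref{D}, which guarantees $u_{i0}\in L^\infty(\Omega)\hookrightarrow L^1(\Omega)$ on the bounded domain $\Omega$.

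There is essentially no obstacle here: the argument is a one-line consequence of mass dissipation plus Neumann boundary conditions, and the only mild subtlety is to justify interchanging $\partial_t$ with the integral, which is immediate from the regularity of the classical solution given by Proposition~\ref{globalsol}. If one were to work instead under \eqref{A2'} (mass conservation) as is done in the rest of the paper after Remark~\ref{equivalence}, the same calculation produces the sharper identity $\sum_i \|u_i(t)\|_{1,\Omega} = \sum_i \|u_{i0}\|_{1,\Omega}$, which still yields the desired uniform bound $M$.
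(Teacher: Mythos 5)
Your proposal is correct and follows exactly the paper's argument: sum the equations, integrate over $\Omega$ using the Neumann boundary condition to drop the diffusion terms, apply \eqref{A2} to see the total mass is nonincreasing, and use nonnegativity to convert this into the $L^1$ bound. The paper's own proof is a terser version of the same computation (stated as an equality, i.e.\ under \eqref{A2'}), so there is nothing to add.
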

\begin{proof}
	Summing up the equations in \eqref{sys} and integrating over $\Omega$, we get, thanks to \eqref{A2} and the homogeneous Neumann boundary condition
	\begin{equation*}
		\partial_t\sum_{i=1}^m\int_{\Omega}u_i(x,t)dx = 0,
	\end{equation*}
	hence the desired estimates.
\end{proof}

For the rest of the paper, thanks to Remark \ref{equivalence}, we will work (w.l.o.g.) with the mass conservation condition \eqref{A2'} instead of \eqref{A2}.
For later use, we denote by
\begin{equation}\label{maxmin}
	d_{\max} = \max_{i=1,\ldots, m}d_i \quad \text{ and } \quad d_{\min} = \min_{i=1,\ldots, m}d_i.
\end{equation}
\begin{lemma}[$L^2$-duality estimate]\label{lem1}
	For $n\ge3$, there exists a constant $C$ independent of $\tau$ such that for all $i=1,\ldots, m$
	\begin{equation*}
		\|u_i\|_{2,\Omega\times(\tau,\tau+1)} \leq 
		M^{\frac{n-2}{n+1}} \left(\sqrt{2}\,\frac{d_{\max}}{d_{\min}}\right)^{\frac{n-2}{3}}
	\end{equation*}
	for all $\tau\geq 0$.
\end{lemma}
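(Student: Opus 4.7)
The plan is to adapt the classical Pierre–Schmitt $L^2$-duality estimate to produce constants that are independent of $\tau$. Summing the equations of \eqref{sys} and using the mass conservation assumption \eqref{A2'} yields $\partial_t V - \Delta W = 0$ with $V = \sum_{i=1}^m u_i$ and $W = \sum_{i=1}^m d_i u_i$ satisfying $d_{\min} V \le W \le d_{\max} V$ pointwise. Lemma~\ref{lem:L1bound} gives $\|V(t)\|_{1,\Omega} \le M$ for every $t\geq 0$, and since $0\le u_i \le V$ pointwise, it suffices to bound $\|V\|_{2,\Omega\times(\tau,\tau+1)}$ uniformly in $\tau$.

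For fixed $\tau\ge 0$ and a shift $h\in (0,1]$ to be optimized, and for a non-negative test function $\theta$ supported in $\Omega\times(\tau,\tau+1)$, I solve the backward heat equation
\[
\partial_t\phi+d_{\max}\Delta\phi = -\theta \quad \text{on } \Omega\times(\tau-h,\tau+1), \qquad \phi(\cdot,\tau+1)=0, \qquad \nx\phi\cdot\nu = 0 \text{ on } \partial\Omega.
\]
Multiplying $\partial_t V=\Delta W$ by $\phi$ and integrating by parts (using both Neumann conditions) produces the key identity
\[
\int_\tau^{\tau+1}\!\!\int_\Omega V\,\theta\,dx\,dt = \int_\Omega V(x,\tau-h)\,\phi(x,\tau-h)\,dx + \int_{\tau-h}^{\tau+1}\!\!\int_\Omega (W-d_{\max}V)\,\Delta\phi\,dx\,dt.
\]
Taking $\theta = V\,\mathbf 1_{(\tau,\tau+1)}$ turns the left-hand side into $\|V\|_{2,\Omega\times(\tau,\tau+1)}^2$. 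The first term on the right is bounded by $M\,\|\phi(\cdot,\tau-h)\|_{\infty,\Omega}$ via H\"older and the uniform $L^1$ bound; to control $\|\phi(\cdot,\tau-h)\|_{\infty,\Omega}$, I use the representation $\phi(\cdot,\tau-h) = \int_\tau^{\tau+1} e^{(s-\tau+h)d_{\max}\Delta}\theta(\cdot,s)\,ds$, the Neumann heat semigroup ultracontractivity $\|e^{t\Delta}f\|_{\infty,\Omega}\lesssim t^{-n/4}\|f\|_{2,\Omega}$, and Cauchy--Schwarz in the time variable, exploiting the positive separation $h$ between $\tau-h$ and the support of $\theta$. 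The second right-hand term is handled via the pointwise inequality $|W-d_{\max}V|\le(d_{\max}-d_{\min})V$ and the maximal regularity bound $\|\Delta\phi\|_{2,\Omega\times(\tau-h,\tau+1)}\le d_{\max}^{-1}\|\theta\|_{2,\Omega\times(\tau-h,\tau+1)}$ from Lemma~\ref{max-reg}.

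Writing $A(\sigma):=\|V\|_{2,\Omega\times(\sigma,\sigma+1)}$ and using $\|V\|_{2,\Omega\times(\tau-h,\tau+1)}^2\le 2\max(A(\tau-1),A(\tau))^2$, the estimates combine into a recursive inequality of the form
\[
A(\tau) \;\le\; c_1\,M\,d_{\max}^{-n/4}\,h^{(2-n)/4} \;+\; \sqrt{2}\,\frac{d_{\max}-d_{\min}}{d_{\max}}\,\max\bigl(A(\tau-1),\,A(\tau)\bigr).
\]
Restricting $\tau$ to integer shifts and invoking Lemma~\ref{elementary} reduces the task to the non-decreasing regime $A(\tau-1)\le A(\tau)$, in which the recursion directly yields a bound on $A(\tau)$ whenever the coefficient of $\max(A(\tau-1),A(\tau))$ is strictly less than one.

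The main obstacle is that $\sqrt{2}(d_{\max}-d_{\min})/d_{\max}$ may exceed one when the diffusion ratio is large, so the recursion does not close in a single step for arbitrary diffusion coefficients. I expect to overcome this by iterating the duality argument several times with progressively refined shifts $h$; the explicit exponents $(n-2)/(n+1)$ on $M$ and $(n-2)/3$ on $\sqrt{2}\,d_{\max}/d_{\min}$ appearing in the claimed bound should emerge from this iteration, in which the $L^1$ mass control is gradually upgraded through a chain of intermediate $L^p$ space-time bounds until the desired $L^2$ estimate is reached.
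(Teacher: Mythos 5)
There is a genuine gap, and you have correctly identified where it sits: your recursion
\begin{equation*}
A(\tau) \;\le\; c_1\,M\,d_{\max}^{-n/4}\,h^{(2-n)/4} \;+\; \sqrt{2}\,\frac{d_{\max}-d_{\min}}{d_{\max}}\,\max\bigl(A(\tau-1),A(\tau)\bigr)
\end{equation*}
does not close whenever $d_{\min}/d_{\max}<1-1/\sqrt{2}$, and the proposed repair (iterating with refined shifts $h$) is not worked out and does not obviously address the structural cause. The $\sqrt{2}$ is not produced by the shift $h$ but by the mismatch between the region where your test function $\theta$ lives, $\Omega\times(\tau,\tau+1)$, and the region over which the duality term $\int\!\!\int (W-d_{\max}V)\Delta\phi$ must be estimated, $\Omega\times(\tau-h,\tau+1)$; shrinking $h$ does not remove the need to control $\|V\|_{2,\Omega\times(\tau-h,\tau)}$ by data from the previous window, so the factor multiplying the linear self-referencing term stays bounded away from $d_{\max}^{-1}(d_{\max}-d_{\min})$ from above by something that can exceed $1$. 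A second, lesser issue is that even if your recursion closed, it is linear in $\max(A(\tau-1),A(\tau))$ and in $M$, so it could not produce the stated sublinear constant $M^{\frac{n-2}{n+1}}(\sqrt{2}\,d_{\max}/d_{\min})^{\frac{n-2}{3}}$.

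The paper's proof avoids both problems with one device you are missing: a smooth cut-off $\varphi_\tau$ in time, equal to $0$ for $t\le\tau$ and to $1$ for $t\ge\tau+1$, applied to the solution rather than to the test function. Working with $\varphi_\tau u_i$ on the full window $\Omega\times(\tau,\tau+2)$ kills the initial-data term entirely (so no ultracontractivity and no shift $h$ are needed), and--crucially--makes the self-referencing term on the right-hand side \emph{exactly} the same norm $\|\sum_i\varphi_\tau u_i\|_{2,\Omega\times(\tau,\tau+2)}$ as the quantity obtained on the left by duality over arbitrary $\theta\ge0$. Its coefficient is then $(d_{\max}-d_{\min})/d_{\max}=1-d_{\min}/d_{\max}<1$ unconditionally, so it can be absorbed for any diffusion ratio. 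The price of the cut-off is the extra source $\varphi_\tau' u_i$, which the paper pays by estimating $\int\!\!\int\phi\,\varphi_\tau'\sum_iu_i$ via the parabolic embedding $\|\phi\|_{q,\Omega\times(\tau,\tau+2)}\le C\|\theta\|_2$ with $q=\frac{2(n+1)}{n-2}$ and interpolating $\|\sum_iu_i\|_{q'}$ between the uniform $L^1$ mass bound and $L^2$; this is precisely where the sublinear exponent $\frac{n-2}{n+1}$ and the power of $M$ come from, and the $\sqrt{2}$ from comparing $(\tau,\tau+2)$ with $(\tau+1,\tau+2)$ then lands only on this sublinear term, where Young's inequality closes the argument. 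Your overall skeleton (backward dual problem with $d_{\max}$, the bound $\|\Delta\phi\|_2\le d_{\max}^{-1}\|\theta\|_2$, integer shifts plus Lemma~\ref{elementary}) matches the paper, but without the cut-off and the $L^{q}$--$L^{q'}$ interpolation step the proof does not go through.
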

\begin{proof}
	We use the duality method. Let $0 \leq \theta \in L^2(\Omega\times(\tau,\tau+2))$ and $\phi$ be the solution to 
	\begin{equation}\label{e1_1}
	\begin{cases}
		\partial_t \phi + d_{\max}\Delta \phi = -\theta, &x\in\Omega, \; t\in (\tau,\tau+2),\\
		\nx \phi \cdot \nu = 0, &x\in\partial\Omega, \; t>0,\\
		\phi(x,\tau+2) = 0, &x\in\Omega,
	\end{cases}
	\end{equation}
	By the maximal regularity in Lemma \ref{max-reg} we have in particular
	\begin{equation*}
		\|\phi\|_{W^{2,1}_{\Omega\times(\tau,\tau+2)}} \leq C_{\Omega}\|\theta\|_{2,\Omega\times(\tau,\tau+2)},
	\end{equation*}
	and thus, by the embedding in Lemma \ref{embedding} with $q = \frac{2(n+1)}{n-2}$,
	\begin{equation}\label{bound_phi}
		\|\phi\|_{q,\Omega\times(\tau,\tau+2)} \leq C\|\theta\|_{2,\Omega\times(\tau,\tau+2)},
	\end{equation}
	for a constant $C$ independent of $\tau$.
%	Therefore, there exists some $\tau_*\in (\tau,\tau+1)$ such that
%	\begin{equation*}
%		\|\phi(\tau_*)\|_{H^2(\Omega)} \leq C_\Omega\|\theta\|_{2,\Omega\times(\tau,\tau+2)}.
%	\end{equation*}

	Let $\varphi: \mathbb R \to [0,1]$ be a smooth function such that $\varphi(s) = 0$ for $s\leq 0$, $\varphi(s) = 1$ for $s\geq 1$ and $0 \leq \varphi'(s) \leq 2$ for all $s\in \mathbb R$. Define the shifted function $\varphi_\tau(\cdot) = \varphi(\cdot - \tau)$. 
%	It therefore follows from \eqref{e4} that
%	\begin{equation*}
%		\|\phi(\tau_*)\|_{\infty,\Omega} \leq 2C_\Omega\|\theta\|_{2,\Omega\times(\tau+1,\tau+2)} \leq  2C_\Omega\|\theta\|_{2,\Omega\times(\tau_*,\tau+2)}.
%	\end{equation*}
	By multiplying the equation of $u_i$ by $\varphi_\tau$ we have
	\begin{equation*}
		\partial_t(\varphi_\tau u_i) - d_i\Delta (\varphi_\tau u_i) = \varphi_\tau' u_i + \varphi_\tau f_i(u).
	\end{equation*}
	By testing with solutions of \eqref{e1_1} and integration by parts we calculate 
	\begin{equation}\label{e2}
	\begin{aligned}
	&\int_{\tau}^{\tau+2}\!\!\int_{\Omega}(\varphi_\tau u_i)\theta dxdt\\
	&= \int_{\tau}^{\tau+2}\!\!\int_{\Omega}(\varphi_\tau u_i) (-\partial_t \phi - d_{\max}\Delta \phi)dxdt\\
	&= \int_{\tau}^{\tau+2}\!\!\int_{\Omega}\phi(\partial_t (\varphi_\tau u_i) - d_i\Delta (\varphi_\tau u_i))dxdt + (d_i-d_{\max})\int_{\tau}^{\tau+2}\!\!\int_{\Omega}(\varphi_\tau u_i)\Delta \phi dxdt  \\
	& =(d_i-d_{\max})\int_{\tau}^{\tau+2}\!\!\int_{\Omega}(\varphi_\tau u_i)\Delta \phi dxdt + \int_{\tau}^{\tau+2}\!\!\int_{\Omega}\phi \left(\varphi_\tau'u_i  +\varphi_\tau f_i(u)\right)dxdt.
	\end{aligned}
	\end{equation}
	Summing up this equality for $i=1,\ldots, m$ and using the mass conservation condition \eqref{A2'}, we have
	with $\varphi'_\tau\le 2$
	\begin{equation}\label{e3}
	\begin{aligned}
	&\int_{\tau}^{\tau+2}\!\!\int_{\Omega}\sum_{i=1}^m(\varphi_\tau u_i) \theta dxdt\\
	&\leq(d_{\max} - d_{\min})\int_{\tau}^{\tau+2}\!\!\int_{\Omega}|\Delta \phi|\sum_{i=1}^{m}(\varphi_\tau u_i)dxdt + 2\int_{\tau}^{\tau+2}\!\!\int_{\Omega}\phi \sum_{i=1}^mu_i dxdt\\
	&\leq  (d_{\max}-d_{\min})\|\Delta\phi\|_{2,\Omega\times(\tau,\tau+2)}\biggl\|\sum_{i=1}^m\varphi_\tau u_i \biggr\|_{2,\Omega\times(\tau,\tau+2)} + 2\int_{\tau}^{\tau+2}\!\!\int_{\Omega}\phi \sum_{i=1}^mu_i dxdt.
	\end{aligned}
	\end{equation}
	For $n\ge3$, we use H\"older's inequality with $q = \frac{2(n+1)}{n-2}$ and $q' = \frac{q}{q-1} = \frac{2n+2}{n+4}$ 
	and \eqref{bound_phi}
	\begin{equation}\label{f1}
	\int_{\tau}^{\tau+2}\!\!\!\int_{\Omega}\phi \sum_{i=1}^mu_i dxdt \leq \|\phi\|_{q,\Omega\times(\tau,\tau+2)}\Bigl\|\sum_{i=1}^mu_i\Bigr\|_{q',\Omega\times(\tau,\tau+2)} \leq C\|\theta\|_{2,\Omega\times(\tau,\tau+2)}\Bigl\|\sum_{i=1}^mu_i\Bigr\|_{q',\Omega\times(\tau,\tau+2)}.
	\end{equation}
	By abbreviating $z := \sum_{i=1}^mu_i$, we apply H\"older once more to estimate 
	\begin{equation}\label{f2}
	\begin{aligned}
		\|z\|_{q',\Omega\times(\tau,\tau+2)} &=  \left(\int_{\tau}^{\tau+2}\!\!\int_{\Omega}z^{\frac{2n+2}{n+4}}dxdt \right)^{\frac{n+4}{2n+2}}
		= \left(\int_{\tau}^{\tau+2}\!\!\int_{\Omega}z^{\frac{2n-4}{n+4}}z^{\frac{6}{n+4}}dxdt \right)^{\frac{n+4}{2n+2}}\\
		&\leq \left[\left(\int_\tau^{\tau+2}\!\!\int_{\Omega}(z^{\frac{2n-4}{n+4}})^{\frac{n+4}{n-2}}dxdt\right)^{\frac{n-2}{n+4}}\left(\int_{\tau}^{\tau+2}\!\!\int_{\Omega}zdxdt\right)^{\frac{6}{n+4}} \right]^{\frac{n+4}{2n+2}}\\
		&\leq C(M)\|z\|_{2,\Omega\times(\tau,\tau+2)}^{\frac{n-2}{n+1}},
	\end{aligned}
	\end{equation}
	where $C(M)=M^{3/(n+1)}$ and $M$ is given by Lemma \ref{lem:L1bound}.
	By inserting \eqref{f1} and \eqref{f2} into \eqref{e3}, and applying Lemma \ref{max-reg}, we obtain
	\begin{equation}\label{f3}
	\begin{aligned}
		\int_{\tau}^{\tau+2}\!\!\int_{\Omega}\sum_{i=1}^m(\varphi_\tau u_i)\theta dxdt &\leq \frac{d_{\max} - d_{\min}}{d_{\max}}\|\theta\|_{2,\Omega\times(\tau,\tau+2)}\biggl\|\sum_{i=1}^m\varphi_\tau u_i\biggr\|_{2,\Omega\times(\tau,\tau+2)}\\
		&\quad + C(M)\|\theta\|_{2,\Omega\times(\tau,\tau+2)}\biggl\|\sum_{i=1}^mu_i\biggr\|_{2,\Omega\times(\tau,\tau+2)}^{\frac{n-2}{n+1}}.
	\end{aligned}
	\end{equation}
	Since this holds for all $\theta\ge0$, duality implies
	\begin{equation*}
		\biggl\|\sum_{i=1}^m\varphi_\tau u_i\biggr\|_{2,\Omega\times(\tau,\tau+2)} \leq \frac{d_{\max}-d_{\min}}{d_{\max}}\biggl\|\sum_{i=1}^m\varphi_\tau u_i\biggr\|_{2,\Omega\times(\tau,\tau+2)} + C(M)\biggl\|\sum_{i=1}^m u_i\biggr\|_{2,\Omega\times(\tau,\tau+2)}^{\frac{n-2}{n+1}}
	\end{equation*}
	and consequently, by recalling $\varphi_\tau(s) = 1$ for $s\in [\tau+1,\tau+2]$ 
	\begin{equation}\label{f4}
		\biggl\|\sum_{i=1}^m u_i\biggr\|_{2,\Omega\times(\tau+1,\tau+2)} \le \biggl\|\sum_{i=1}^m\varphi_\tau u_i\biggr\|_{2,\Omega\times(\tau,\tau+2)} \leq \frac{C(M)d_{\max}}{d_{\min}}\biggl\|\sum_{i=1}^m u_i\biggr\|_{2,\Omega\times(\tau,\tau+2)}^{\frac{n-2}{n+1}}.
	\end{equation}
%	We now consider $\theta = \sum_{i=1}^mu_i$ and 	
%		\begin{equation}\label{e5}
%			\left\|\theta\right\|_{2,\Omega\times(\tau,\tau+1)} \leq \left\|\theta\right\|_{2,\Omega\times(\tau+1,\tau+2)}.
%		\end{equation}	
	We now suppose $\tau\in \mathbb N$ and consider $\tau$ such that 
	\begin{equation}\label{increase_tau1}
		\biggl\|\sum_{i=1}^mu_i\biggr\|_{2,\Omega\times(\tau,\tau+1)} \leq \biggl\|\sum_{i=1}^mu_i\biggr\|_{2,\Omega\times(\tau+1,\tau+2)},
	\end{equation}
	which implies 
	\begin{equation*}
	\biggl\|\sum_{i=1}^mu_i\biggr\|_{2,\Omega\times(\tau,\tau+2)} \leq \sqrt{2} \biggl\|\sum_{i=1}^mu_i\biggr\|_{2,\Omega\times(\tau+1,\tau+2)}.
	\end{equation*}
	It follows from \eqref{f4} that
	\begin{equation*}
		\biggl\|\sum_{i=1}^m u_i\biggr\|_{2,\Omega\times(\tau+1,\tau+2)} \leq \frac{2^{\frac{n-2}{2(n+1)}}C(M)d_{\max}}{d_{\min}} \biggl\|\sum_{i=1}^m u_i\biggr\|_{2,\Omega\times(\tau+1,\tau+2)}^{\frac{n-2}{n+1}}.
	\end{equation*}
	Finally by Young's inequality we get for all $\tau\in\mathbb N$ such that condition \eqref{increase_tau1} holds
	\begin{equation*}
		\biggl\|\sum_{i=1}^m u_i\biggr\|_{2,\Omega\times(\tau+1,\tau+2)} \leq 2^{\frac{n-2}{6}} \left(\frac{C(M)d_{\max}}{d_{\min}}\right)^{\frac{n-2}{3}}
	\end{equation*}
	Thanks to Lemma \ref{elementary}, this is also true for all $\tau\in \mathbb N$, and thus the proof of Lemma \ref{lem1} is finished.
\end{proof}

\begin{lemma}\label{lem2}
	There exists a constant $C$ independent of $\tau$ such that for all $i=1,\ldots, m$
	\begin{equation*}
		\left\|\int_{\tau}^{\tau+1}u_i(x,t)dt\right\|_{\infty,\Omega} \leq C
	\end{equation*}
	for all $\tau\geq 0$.
\end{lemma}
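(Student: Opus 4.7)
I would reduce the uniform $L^\infty$-bound of the time-average to a scalar Neumann elliptic problem. Fix a smooth temporal cutoff $\psi\in C_c^\infty((\tau,\tau+2))$ with $\psi\equiv 1$ on $[\tau+1/2,\tau+3/2]$, $0\le\psi\le 1$ and $|\psi'|\le C$. Multiplying the $i$-th equation of \eqref{sys} by $\psi$ and integrating in $t\in(\tau,\tau+2)$, the time-derivative term integrates by parts with vanishing boundary contributions to give
\[
-d_i\Delta V_i \;=\; F_i \quad\text{in }\O,\qquad \nx V_i\cdot\nu=0\quad\text{on }\pa\O,
\]
where $V_i(x):=\int_\tau^{\tau+2}\psi(t)\,u_i(x,t)\,dt\ge \int_{\tau+1/2}^{\tau+3/2}u_i(x,t)\,dt\ge 0$ and $F_i(x):=\int_\tau^{\tau+2}\bigl[\psi'(t)\,u_i(x,t)+\psi(t)\,f_i(u(x,t))\bigr]dt$. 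Neumann elliptic regularity together with the uniform $L^1$-mass bound of Lemma \ref{lem:L1bound} yields $\|V_i\|_{\infty,\O}\le C\bigl(\|F_i\|_{p,\O}+\|V_i\|_{1,\O}\bigr)$ for any $p>n/2$, and $\|V_i\|_{1,\O}\le 2M$. So the task reduces to bounding $\|F_i\|_{p,\O}$ uniformly in $\tau$ for some $p>n/2$; a time shift then delivers the stated bound on $[\tau,\tau+1]$.

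\textbf{Reduction and bootstrap.} By Minkowski in time on the linear piece and the growth bound \eqref{A3} on the nonlinear piece,
\[
\|F_i\|_{p,\O}\;\le\; C\,\|u_i\|_{p,\O\times(\tau,\tau+2)} \;+\; C \;+\; C\,\|u\|_{p(2+\varepsilon),\O\times(\tau,\tau+2)}^{2+\varepsilon},
\]
so the entire task reduces to producing $\|u_i\|_{r,\O\times(\tau,\tau+2)}\le C$ uniformly in $\tau$ for some $r>(2+\varepsilon)n/2$. Starting from the uniform $L^2$-estimate of Lemma \ref{lem1}, a clean first gain comes from the sum equation: summing over $i$ and using mass conservation (which kills the $f_i$-terms) yields $-\Delta W=-\int\psi'z\,dt$, where $W:=\sum d_iV_i$ and $z:=\sum u_i$. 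Since $\int\psi'z\,dt$ is uniformly bounded in $L^2(\O)$, elliptic regularity gives $W\in W^{2,2}(\O)$ uniformly, hence by Sobolev embedding $V_i\le W/d_{\min}$ lies in $L^{2n/(n-4)}(\O)$ (or in $L^\infty$ if $n\le 3$, completing the proof there). For $n\ge 4$, I would iterate by feeding such improved averaged estimates into parabolic maximal regularity applied to $\psi u_i$, which satisfies $(\psi u_i)_t-d_i\Delta(\psi u_i)=\psi'u_i+\psi f_i(u)$ with zero initial data, together with the parabolic Sobolev embedding of Lemma \ref{embedding}. Interpolation with the $L^\infty_t L^1_x$ mass bound and Lemma \ref{elementary}, applied as in the proof of Lemma \ref{lem1} to pass from monotone-increment bounds on chained windows to $\tau$-uniform bounds on each unit window, then allow a finite chain of improvements reaching $r>(2+\varepsilon)n/2$.

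\textbf{Main obstacle.} The nonlinearity is essentially scaling-critical (slightly super-quadratic), so every bootstrap step must strictly improve integrability; this is precisely what forces the smallness of $\varepsilon$ in \eqref{A3}. Furthermore, the duality-absorption of Lemma \ref{lem1} is tight at $L^2$ via the sharp constant $(d_{\max}-d_{\min})/d_{\max}<1$ that stems from $\|\Delta\phi\|_2\le \|\theta\|_2/d_{\max}$, and does not obviously close at higher $L^p$. The higher bootstrap steps must therefore rely on parabolic regularity applied to cutoff-augmented quantities (whose vanishing initial data cuts the dependence on $u_i(\tau)$), combined with the elliptic trick on the sum. Orchestrating these finitely many iterations so that all constants stay independent of $\tau$ is the delicate point.
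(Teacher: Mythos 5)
Your first step --- testing the $i$-th equation with a temporal cutoff, summing over $i$ so that \eqref{A2'} kills the reaction terms, and reading off a Neumann elliptic problem for the time-integrated sum $W=\sum_i d_iV_i$ with right-hand side controlled in $L^2(\Omega)$ by Lemma \ref{lem1} --- is exactly the paper's device (the paper uses a one-sided cutoff $\varphi_\tau$ and a comparison function $\Psi$ solving $-\Delta\Psi+\Psi=H_\tau$, but this is cosmetic). The gap is in how you continue for $n\ge4$. The paper's point is that this elliptic step can be \emph{iterated on itself}: the source of the elliptic problem, $H_\tau$, is built only from the time-integrals $\int u_i\,dt$ (linearly --- the nonlinearity never reappears because it is annihilated by summation at every stage), so each application of elliptic regularity plus Sobolev embedding upgrades $\bigl\|\sum_i\int u_i\,dt\bigr\|_{p,\Omega}$ to $\bigl\|\sum_i\int u_i\,dt\bigr\|_{np/(n-2),\Omega}$, and finitely many such steps (together with a shift in $\tau$ to pass from the window $[\tau+1,\tau+2]$ back to $[\tau,\tau+2]$) reach $p>n/2$ and hence $L^\infty$. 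Nonnegativity then transfers the bound on the sum to each $V_i$, which you already noted via $V_i\le W/d_{\min}$.

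Instead, for $n\ge4$ you propose to return to the individual equations and bound $\|F_i\|_{p,\Omega}$ with $p>n/2$, which forces you to control $\|u\|_{r,\Omega\times(\tau,\tau+2)}$ for some $r>(2+\varepsilon)n/2$. This is where the argument breaks down. First, bounds on the time-averages $\int u_i\,dt$ do not control space-time $L^r$ norms of $u$ itself, so the "improved averaged estimates" cannot be fed into parabolic maximal regularity for $\psi u_i$ in the way you describe. Second, the parabolic bootstrap on $\psi u_i$ does not close: maximal regularity in $L^p$ requires $\psi f_i(u)\in L^p$, i.e.\ $u\in L^{p(2+\varepsilon)}$, while the parabolic Sobolev embedding of Lemma \ref{embedding} gains only the factor $(n+2)/(n+2-2p)$ in the exponent; starting from $p=2$ in dimension $n\ge3$ this loses ground against the (slightly super-)quadratic growth --- indeed, if such a bootstrap worked, the whole theorem would follow by classical arguments and the paper would be unnecessary. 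In effect your route makes Lemma \ref{lem2} depend on an $L^r$ estimate that is essentially as strong as the main theorem, whereas the lemma is designed precisely to be provable \emph{before} any pointwise-in-time control of $u$ is available, using only Lemma \ref{lem:L1bound}, Lemma \ref{lem1}, and the self-contained elliptic iteration on the summed time-integral.
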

\begin{proof}
	 Define $w_i(x,t) = \varphi_\tau u_i(x,t)$, where $\varphi_\tau$ is the shifted cut-off function defined in Lemma \ref{lem1}. We have
	\begin{equation}\label{e8}
		\partial_t w_i - d_i\Delta w_i = \varphi'_\tau u_i + \varphi_\tau f_i(u), \quad \nx w_i\cdot \nu = 0, \quad w_i(x,\tau) = 0.
	\end{equation}
	Integrating \eqref{e8} on $(\tau,\tau+2)$ and summing over $i=1,\ldots, m$, we get
	\begin{equation*}
		\sum_{i=1}^mw_i(x,\tau+2) - \Delta\left(\sum_{i=1}^m\int_{\tau}^{\tau+2}d_iw_i(x,t)dt\right) = \sum_{i=1}^{m}\int_\tau^{\tau+2}\varphi'_\tau u_i(x,t)dt
	\end{equation*}
	 thanks to the condition \eqref{A2'}. Hence
	 \begin{equation}\label{e9}
		 -\Delta\left(\sum_{i=1}^m\int_{\tau}^{\tau+2}d_iw_i(x,t)dt\right) + \left(\sum_{i=1}^m\int_{\tau}^{\tau+2}d_iw_i(x,t)dt\right) \leq H_\tau(x)
	 \end{equation}
	 subject to homogeneous Neumann boundary condition, with 
	 \begin{equation}\label{def_H}
	 	H_\tau(x) = \left(\sum_{i=1}^m\int_{\tau}^{\tau+2}d_iw_i(x,t)dt\right) + \sum_{i=1}^m\int_{\tau}^{\tau+2}\varphi'_\tau u_i(x,t)dt.
	 \end{equation}
	 Let $\Psi$ be the solution to
	 \begin{equation}\label{eq_Psi}
		 -\Delta \Psi + \Psi = H_\tau(x), \; x\in\Omega, \qquad \nx \Psi \cdot \nu = 0, \; x\in\partial\Omega.
	 \end{equation}
	 By comparison we have
	 \begin{equation}\label{comparison}
		 \sum_{i=1}^m\int_{\tau}^{\tau+2}d_iw_i(x,t)dt \leq \Psi(x) \quad \text{ for a.e. } \quad x\in\Omega.
	 \end{equation}
	 For any $1<p<\infty$, the maximal regularity for linear elliptic equation gives
	 \begin{equation}\label{elliptic}
		 \|\Psi\|_{W^{2,p}(\Omega)} \leq C(p)\|H_\tau\|_{\Omega,p}.
	 \end{equation}
	 By Sobolev's embedding we have
	 \begin{equation}\label{f4_1}
		 \|\Psi\|_{q,\Omega} \leq C(p)\|H_\tau\|_{p,\Omega}
	 \end{equation}
	 for $q = \infty$ if $p > \frac{n}{2}$ and $q<\frac{np}{n-2p}$ arbitrary if $p\leq \frac{n}{2}$. In the later case, by choosing $q = \frac{np}{n-2}$, 
	 \begin{equation*}
		 \|\Psi\|_{\frac{np}{n-2},\Omega} \leq C\|H_\tau\|_{p,\Omega}.
	 \end{equation*}	
	 By \eqref{comparison} it follows that
	 \begin{equation}\label{f5}
	 	\biggl\|\sum_{i=1}^m\int_{\tau}^{\tau+2}d_iw_i(x,t)dt \biggr\|_{\frac{np}{n-2},\Omega} \leq C(p)\|H_\tau\|_{p,\Omega}.
	 \end{equation}
	 
	 The bootstrap procedure goes as follows. Firstly, from Lemma \ref{lem1} and the fact that $0\leq \varphi, \varphi' \leq 2$, it yields
	 \begin{equation*}
		 \|H_\tau\|_{2,\Omega} \leq C
	 \end{equation*}
	 with $C$ is independent of $\tau$. 
%	 Let $\Psi$ be the solution to the equation
%	 \begin{equation}\label{Psi}
%		 \begin{cases}
%			 -\Delta \Psi + \Psi = H, &x\in\Omega,\\
%			 \nabla \Psi \cdot \nu = 0, &x\in\partial\Omega.
%		 \end{cases}
%	 \end{equation}
%	 By comparison principle we have
%	 \begin{equation}\label{e10}
%		 \sum_{i=1}^m\int_{\tau}^{\tau+2}d_iw_i(x,t)dt \leq \Psi(x) \quad \text{ for a.e. } \quad x\in\Omega.
%	 \end{equation}
%	 	 Applying maximal regularity for linear elliptic equation yields
%	 	 \begin{equation*}
%	 		 \left\|\Psi\right\|_{H^2(\Omega)} \leq C\|H_\tau\|_{2,\Omega} \leq C.
%	 	 \end{equation*}
	 If $2>\frac{n}{2}$, i.e. $n=3$, we get from the Sobolev embedding \eqref{f4_1}
	 \begin{equation*}
		 \biggl\|\sum_{i=1}^m\int_{\tau}^{\tau+2}d_iw_i(x,t)dt \biggr\|_{\infty,\Omega} \leq C\|H_\tau\|_{2,\Omega} \leq C.
	 \end{equation*}
	Otherwise, for $n\ge4$, \eqref{f5} gives
	\begin{equation}\label{f6}
		\biggl\|\sum_{i=1}^m\int_{\tau}^{\tau+2}d_iw_i(x,t)dt \biggr\|_{\frac{2n}{n-2},\Omega} \leq C\|H_\tau\|_{2,\Omega} \leq C.
	\end{equation}
	Recalling $w_i(x,t) = \varphi_\tau u_i(x,t)$, and thus $w_i(x,t) = u_i(x,t)$ for all $t\in [\tau+1,\tau+2]$, we obtain from \eqref{f6} that
	\begin{equation}\label{f7}
		\biggl\|\sum_{i=1}^m\int_{\tau+1}^{\tau+2}u_i(x,t)dt\biggr\|_{\frac{2n}{n-2},\Omega} \leq C, \qquad \text{for all}\quad \tau >0.
	\end{equation}
%	Now consider $\tau$ such that
%	\begin{equation}\label{increase_tau2}
%		\left\|\sum_{i=1}^m\int_{\tau}^{\tau+1}u_i(x,t)dt\right\|_{\frac{2n}{n-2},\Omega} \leq \left\|\sum_{i=1}^m\int_{\tau+1}^{\tau+2}u_i(x,t)dt\right\|_{\frac{2n}{n-2},\Omega}.
%	\end{equation}
	We get from \eqref{f7} and the definition of $H_\tau$ in \eqref{def_H} that
	\begin{equation}\label{f8}
		\|H_\tau\|_{\frac{2n}{n-2},\Omega} \leq C
	\end{equation}
	with $C$ independent of $\tau$, for all $\tau>0$. %By Lemma \ref{elementary}, \eqref{f8} holds for all $\tau\in \mathbb N$. 
	
	Now we can repeat the bootstrap argument to get 
	\begin{equation*}
		\|H_\tau\|_{2\left(\frac{n}{n-2}\right)^k, \Omega} \leq C
	\end{equation*}
	for all $k\geq 1$ and $C$ independent of $\tau$. Choose $k$ large enough such that $2\left(\frac{n}{n-2}\right)^k > \frac{n}{2}$, we obtain after a final Sobolev embedding \eqref{f4_1}
	\begin{equation*}
		\biggl\|\sum_{i=1}^m\int_{\tau}^{\tau+2}d_iw_i(x,t)dt \biggr\|_{\infty,\Omega} \leq C
	\end{equation*}
	and consequently
	\begin{equation*}
		\biggl\|\int_{\tau+1}^{\tau+2}u_i(x,t)dt \biggr\|_{\infty,\Omega} \leq C
	\end{equation*}
	for all $i=1,\ldots, m$, all $\tau>0$, and $C$ independent of $\tau$. This completes the proof of Lemma \ref{lem2}.
%	\vskip 1cm
%	From this, \eqref{comparison} and the fact that $w_i(x,t) = u_i(x,t)$ for all $t\in (\tau+1,\tau+2)$ we obtain
%	 \begin{equation*}
%		 \left\|\int_{\tau+1}^{\tau+2}u_i(x,t)dt\right\|_{\infty,\Omega} \leq C
%	 \end{equation*}
%	 due to the nonnegativity of $u_i$ and definition of $\varphi$, with a constant $C$ is independent of $\tau$. This completes the proof of Lemma \ref{lem2}.
\end{proof}

Summing up the equation \eqref{e8} for $i=1,\ldots, m$, and using condition \eqref{A2'} lead to
\begin{equation}\label{e11}
	\partial_t\left(\sum_{i=1}^m w_i \right) - \Delta\left(\sum_{i=1}^md_iw_i \right) = \varphi'_\tau\sum_{i=1}^mu_i.
\end{equation}
We set 
\begin{equation}\label{def_v}
v(x,t) = \int_\tau^{t}\sum_{i=1}^md_iw_i(x,s)ds \quad \text{ for } \quad t\in (\tau,\tau+2).
\end{equation}
Integrating \eqref{e11} on $(\tau,\tau+2)$ and recalling that $w_i(x,\tau) = \varphi(0)u_i(x,\tau) = 0$, we get
\begin{equation}\label{eq_v}
	\begin{cases}
		b(x,t)\partial_tv(x,t) - \Delta v(x,t) = \varphi'_\tau \int_{\tau}^{t} \sum_{i=1}^m u_i(x,s)ds, &x\in\Omega, \; t\in (\tau,\tau+2),\\
		\nx v(x,t)\cdot \nu = 0, & x\in\partial\Omega,\; t\in (\tau,\tau+2)\\
		v(x,\tau) = 0, &x\in\Omega,
	\end{cases}
\end{equation}
where
\begin{equation}\label{def_b}	
	\frac{1}{d_{\max}} \leq b(x,t):= \frac{\sum_{i=1}^mw_i(x,t)}{\sum_{i=1}^md_iw_i(x,t)} \leq \frac{1}{d_{\min}}.
\end{equation}
\begin{lemma}\label{lem3}
	There exist constants $C>0$ and $\delta \in (0,1)$ {\normalfont independent of $\tau$} such that
	\begin{equation}\label{Linf_v}
		\|v(x,t)\|_{\infty,\Omega\times(\tau,\tau+2)} \leq C
	\end{equation}
	and
	\begin{equation}\label{Holder_v}
		|v(x,t) - v(x',t)| \leq C|x - x'|^{\delta} \quad \text{ for all } \quad x, x'\in\Omega, \; t\in (\tau,\tau+2).
	\end{equation}
\end{lemma}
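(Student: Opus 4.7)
The $L^\infty$ bound \eqref{Linf_v} I would dispose of immediately: since $w_i = \varphi_\tau u_i \le u_i$ and $\varphi_\tau$ vanishes outside $[\tau,\tau+2]$, the definition \eqref{def_v} of $v$ together with Lemma~\ref{lem2} gives
$$0 \le v(x,t) \le d_{\max}\int_{\tau}^{\tau+2}\sum_{i=1}^m u_i(x,s)\,ds \le C,$$
with $C$ independent of $\tau$.

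For the H\"older estimate \eqref{Holder_v}, the plan is to recast \eqref{eq_v} as the non-divergence form linear parabolic equation
$$\partial_t v - \frac{1}{b(x,t)}\Delta v = h(x,t), \qquad h(x,t) := \frac{\varphi'_\tau(t)}{b(x,t)}\int_{\tau}^{t}\sum_{i=1}^m u_i(x,s)\,ds,$$
and invoke parabolic H\"older regularity for equations with bounded measurable leading coefficients. By \eqref{def_b} the coefficient $1/b$ lies in the fixed interval $[d_{\min},d_{\max}]$, so the equation is uniformly parabolic with ellipticity bounds independent of $\tau$. Lemma~\ref{lem2} together with $|\varphi'_\tau|\le 2$ yields $\|h\|_{\infty,\Omega\times(\tau,\tau+2)} \le C$ uniformly in $\tau$, while \eqref{Linf_v} combined with the zero initial datum gives a uniform $L^\infty$ control on the solution itself. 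A Krylov--Safonov-type argument (or equivalently a parabolic De Giorgi--Nash--Moser iteration applied to the equivalent form $b\partial_t v - \Delta v = \varphi'_\tau\int_\tau^t \sum_i u_i$) then produces an exponent $\delta\in(0,1)$ and a H\"older bound depending only on $n$, $\Omega$, $d_{\min}$, $d_{\max}$, $\|h\|_\infty$ and $\|v\|_\infty$, hence independent of $\tau$.

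The main obstacle I expect is obtaining the estimate up to the boundary $\partial\Omega$ rather than purely in the interior. Interior H\"older continuity of solutions to uniformly parabolic equations with merely measurable leading coefficients is classical, but under homogeneous Neumann conditions one must either appeal to a boundary version of Krylov--Safonov, or locally flatten $\partial\Omega$ and reflect evenly across it, ensuring that the reflected coefficient remains bounded measurable and uniformly elliptic. The smoothness of $\partial\Omega$ makes either route viable and preserves the uniformity in $\tau$, yielding the desired exponent $\delta\in(0,1)$ and constant $C$ independent of $\tau$ in \eqref{Holder_v}.
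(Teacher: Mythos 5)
Your proposal is correct and follows essentially the same route as the paper: the $L^\infty$ bound is read off directly from Lemma~\ref{lem2} and the definition \eqref{def_v}, while the H\"older estimate for the uniformly parabolic equation $b\,\partial_t v-\Delta v=\varphi'_\tau\int_\tau^t\sum_i u_i$ with bounded measurable $b\in[1/d_{\max},1/d_{\min}]$ is obtained from De Giorgi--Nash--Moser/Krylov--Safonov-type regularity, which is exactly what the cited Appendix~A of \cite{FMT19} carries out, with uniformity in $\tau$ coming from the uniform $L^\infty$ bounds on $v$ and the right-hand side. The paper simply outsources this step to the reference rather than reproving it, so your sketch fills in the same argument in more detail.
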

\begin{proof}
	The uniform bounds of $v$ independent of $\tau$ follows directly from Lemma \ref{lem2}. The H\"older continuity of $v$ was proved in \cite[Appendix A]{FMT19} with the remark that the constant is $C$ in \eqref{Holder_v} is independent of $\tau$ thanks to the $L^\infty$-bound of $v$ in \eqref{Linf_v}.
\end{proof}

\begin{lemma}\label{lem3_1}
	There exists a constant $C$ independent of $\tau$ such that 
	\begin{equation}\label{e12}
		\sup_{\Omega\times(\tau,\tau+2)}|\nx v(x,t)| \leq C(1+U^{\frac{1-\delta}{2-\delta}})
	\end{equation}
	where 
	\begin{equation*}
		U = \sup_{\Omega\times(\tau,\tau+2)}\sup_{i=1,\ldots, m}|u_i(x,t)|.
	\end{equation*}
\end{lemma}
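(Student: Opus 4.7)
Equation \eqref{eq_v} carries the non-constant coefficient $b(x,t)$ in front of $\partial_t v$, so Lemma \ref{key_lem} does not apply directly. The key idea is that $\partial_t v$ is \emph{explicitly given} by the definition \eqref{def_v}:
\begin{equation*}
\partial_t v(x,t) = \sum_{i=1}^m d_i\, w_i(x,t) = \sum_{i=1}^m d_i\,\varphi_\tau(t)\, u_i(x,t),
\end{equation*}
so that $|\partial_t v| \leq m\, d_{\max}\, U$ pointwise in $\Omega\times(\tau,\tau+2)$. This is what allows the non-constant $b$ to be absorbed into a source term.

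Next I fix any constant diffusion coefficient $d>0$ (for concreteness $d:=d_{\min}$) and rewrite \eqref{eq_v} as the standard inhomogeneous heat equation
\begin{equation*}
\partial_t v - d\,\Delta v = \bigl(1 - d\,b(x,t)\bigr)\,\partial_t v \;+\; d\,\varphi'_\tau(t)\int_\tau^{t}\sum_{i=1}^m u_i(x,s)\,ds \;=:\; F(x,t),
\end{equation*}
still with zero initial data $v(x,\tau)=0$ and homogeneous Neumann boundary condition. By \eqref{def_b}, one has $0\leq 1 - d_{\min}\,b(x,t) \leq 1 - d_{\min}/d_{\max}<1$, so the first term is dominated by $C\,U$ via the pointwise bound on $\partial_t v$. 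For the second term, Lemma \ref{lem2} applied on each of $[\tau,\tau+1]$ and $[\tau+1,\tau+2]$, together with $|\varphi'_\tau|\leq 2$, produces a bound independent of $\tau$. Hence
\begin{equation*}
\sup_{\Omega\times(\tau,\tau+2)}|F(x,t)| \leq C(1+U),
\end{equation*}
with $C$ independent of $\tau$.

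Finally, I apply Lemma \ref{key_lem} to the rewritten equation with H\"older exponent $\gamma=\delta$ and H\"older constant $H$ from \eqref{Holder_v} (both $\tau$-independent by Lemma \ref{lem3}), zero initial data, and source bound $F=C(1+U)$. This yields
\begin{equation*}
|\nx v(x,t)| \leq B\,H^{\frac{1}{2-\delta}}\bigl(C(1+U)\bigr)^{\frac{1-\delta}{2-\delta}} \leq C\bigl(1 + U^{\frac{1-\delta}{2-\delta}}\bigr),
\end{equation*}
which is exactly \eqref{e12}. No substantial obstacle is anticipated here; the only subtle point is that all the constants (the bound on $1-d\,b$, the bound on $F$, and the H\"older constant of Lemma \ref{lem3}) must be uniform in $\tau$, which is precisely the form in which the preceding lemmas have been stated. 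The conceptual trick is the rewriting above, which uses the explicit formula $\partial_t v=\sum d_i w_i$ to convert the variable-coefficient difficulty into a linear-in-$U$ source on the right-hand side.
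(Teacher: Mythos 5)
Your proof is correct and follows essentially the same route as the paper: the paper also rewrites \eqref{eq_v} as a constant-coefficient heat equation $\partial_t v - \Delta v = \psi$ with $\psi = \varphi'_\tau\int_\tau^t\sum_i u_i\,ds + \sum_i(d_i-1)w_i$ (your residual $(1-d\,b)\partial_t v = \sum_i(d_i-d_{\min})w_i$ is the same term with $d_{\min}$ in place of $1$), bounds $\psi$ by $C(1+U)$ using Lemma \ref{lem2} and $|w_i|\le U$, and then applies Lemma \ref{key_lem} with the $\tau$-uniform H\"older data from Lemma \ref{lem3}. The only difference is the cosmetic choice of the constant diffusion coefficient.
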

\begin{proof}
From \eqref{e11} it follows that, $v$ is also the solution to 
\begin{equation}\label{eq_v1}
\begin{cases}
	\partial_t v(x,t) - \Delta v(x,t) = \psi(x,t), &x\in\Omega,\; t\in (\tau,\tau+2),\\
	\nx v(x,t)\cdot\nu = 0, &x\in\partial\Omega,\; t\in (\tau,\tau+2),\\
	v(x,\tau) = 0, &x\in\Omega
\end{cases}
\end{equation}
with
\begin{equation}\label{psi}
	\psi(x,t) = \varphi'_\tau \int_{\tau}^{t} \sum_{i=1}^m u_i(x,s)ds + \sum_{i=1}^m(d_i - 1)w_i(x,t).
\end{equation}
By applying Lemma \ref{key_lem}, we have
\begin{equation}
	\sup_{\Omega\times(\tau,\tau+2)}|\nx v(x,t)| \leq C\sup_{\Omega\times(\tau,\tau+2)}|\psi(x,t)|^{\frac{1-\delta}{2-\delta}}
\end{equation}
where $\delta$ is in Lemma \ref{lem3}. Using the uniform bounds in Lemma \ref{lem2} and $w_i(x,t) = \varphi_\tau u_i(x,t)$ we obtain the desired estimate \eqref{e12}.
\end{proof}
To estimate $\Delta v$ we need the following two lemmas.
\begin{lemma}\label{lem4}
	Let $w_i$ be the solution to \eqref{e8}, we have
	\begin{equation*}
		\sup_{\Omega\times(\tau,\tau+2)}|\nx w_i(x,t)| \leq C\left(1 + U^{\frac{3+\varepsilon}{2}} \right)
	\end{equation*}
	where the constant $C$ is independent of $\tau$.
\end{lemma}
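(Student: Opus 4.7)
The plan is to apply the regularity interpolation lemma (Lemma \ref{key_lem}) directly to $w_i$, which solves the heat equation \eqref{e8} with zero initial data $w_i(x,\tau)=0$, inhomogeneity $\psi_i(x,t) := \varphi'_\tau u_i + \varphi_\tau f_i(u)$, and homogeneous Neumann boundary condition. Since Lemma \ref{key_lem} allows $\gamma\in[0,1)$, I would simply choose $\gamma=0$, so that the H\"older hypothesis \eqref{AA1} reduces to a pointwise oscillation bound and no genuine spatial regularity of $w_i$ is needed. This avoids having to extract a H\"older estimate from the equation of $w_i$ and makes the argument quantitative directly in terms of $U$.

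The verification of the hypotheses is short. First, since $0\le\varphi_\tau\le1$ and $0\le u_i\le U$, we have $0\le w_i\le U$, so
\[
|w_i(x,t)-w_i(x',t)|\le 2U =: H \qquad \text{for all } x,x'\in\Omega,\ t\in(\tau,\tau+2),
\]
which is \eqref{AA1} with $\gamma=0$. Second, using $0\le\varphi_\tau\le1$, $0\le\varphi'_\tau\le2$ and the growth assumption \eqref{A3},
\[
|\psi_i(x,t)|\le 2U + K(1+U^{2+\varepsilon}) \le C(1+U^{2+\varepsilon}) =: F,
\]
which is \eqref{AA2}. Third, since $w_i(\cdot,\tau)\equiv 0$, the initial-data term $C\|w_i(\cdot,\tau)\|_{C^1(\Omega)}$ in Lemma \ref{key_lem} vanishes.

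Applying Lemma \ref{key_lem} with these parameters gives
\[
\sup_{\Omega\times(\tau,\tau+2)}|\nx w_i(x,t)| \le B\, H^{1/2} F^{1/2} \le C\, U^{1/2} (1+U^{2+\varepsilon})^{1/2}.
\]
Expanding the right-hand side and using $\sqrt{a+b}\le\sqrt{a}+\sqrt{b}$ yields a bound by $C(U^{1/2}+U^{(3+\varepsilon)/2})$, and a case distinction $U\le 1$ versus $U\ge 1$ absorbs $U^{1/2}$ into $1+U^{(3+\varepsilon)/2}$. This produces the stated estimate with a constant $C$ depending only on $\Omega$, $n$, $d_i$, $K$, but independent of $\tau$ (because all inputs to Lemma \ref{key_lem} are $\tau$-independent).

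There is essentially no obstacle beyond correctly verifying the hypotheses; the only mild subtlety is the decision to take $\gamma=0$, which trades a sharper interpolation exponent for the complete avoidance of any spatial regularity assumption on $w_i$ itself. The exponent $(3+\varepsilon)/2$ that appears in the conclusion is exactly the one coming from $H^{1/2}F^{1/2}$ with $F\sim U^{2+\varepsilon}$ and $H\sim U$, and it is this combination that will later feed into Lemma \ref{key_lem1} (via $\psi$ in \eqref{psi}) to control $\Delta v$ and ultimately close the uniform bound.
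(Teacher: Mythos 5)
Your proposal is correct and coincides with the paper's own proof: the authors likewise apply Lemma \ref{key_lem} with $\gamma=0$ to equation \eqref{e8}, take the trivial oscillation bound $H\sim U$ from $0\le w_i=\varphi_\tau u_i\le U$, bound the inhomogeneity by $C(1+U^{2+\varepsilon})$ via \eqref{A3}, and conclude with the exponent $\tfrac{3+\varepsilon}{2}$ from $H^{1/2}F^{1/2}$. The only difference is that you spell out the vanishing of the initial-data term and the absorption of $U^{1/2}$, which the paper leaves implicit.
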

\begin{proof}
	By applying Lemma \ref{key_lem} with $\gamma=0$ to equation \eqref{e8} and using the assumption \eqref{A3} we get
	with $W = \sup_{\Omega\times(\tau,\tau+2)}\sup_{i=1,\ldots, m}|w_i(x,t)|$
	\begin{equation*}
	\begin{aligned}
		\sup_{\Omega\times(\tau,\tau+2)}|\nx w_i(x,t)| &\leq C W^{1/2}\sup_{\Omega\times(\tau,\tau+2)}\left|\varphi'_\tau u_i(x,t) + \varphi_\tau f_i(u)\right|^{1/2}\\
		&\leq CW^{1/2}\left(1+U^{\frac{2+\varepsilon}{2}}\right)
		\leq CU^{1/2}\left(1+U^{\frac{2+\varepsilon}{2}}\right)
		\leq C\left(1+ U^{\frac{3+\varepsilon}{2}}\right)
	\end{aligned}
	\end{equation*}
	since $w_i(x,t) = \varphi_\tau u_i(x,t)$.
\end{proof}
\begin{lemma}\label{lem5}
	Define
	\begin{equation*}
		v_i(x,t) = \int_{\tau}^{t}u_i(x,s)ds
	\end{equation*}
	for $(x,t)\in\Omega\times(\tau,\tau+2)$. Then
	\begin{equation*}
		\sup_{\Omega\times(\tau,\tau+2)}|\nx v_i(x,t)| \leq C\left(1 + U^{\frac{1+\varepsilon}{2}} \right)
	\end{equation*}
	where the constant $C$ is independent of $\tau$.
\end{lemma}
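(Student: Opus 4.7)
The plan is to derive a linear parabolic PDE for $v_i$ with zero initial data and apply the regularity interpolation of Lemma \ref{key_lem} with H\"older exponent $\gamma=0$. Integrating the equation $\pa_t u_i - d_i \Delta u_i = f_i(u)$ in time from $\tau$ to $t$ and commuting $\Delta$ with $\int_\tau^t$, the function $v_i$ satisfies
\begin{equation*}
\pa_t v_i - d_i \Delta v_i = g_i(x,t), \qquad v_i(x,\tau) = 0, \qquad \nx v_i \cdot \nu = 0 \text{ on } \pa\O,
\end{equation*}
where $g_i(x,t) := u_i(x,\tau) + \int_\tau^t f_i(u(x,s))\,ds$. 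Since the initial data vanishes, Lemma \ref{key_lem} with $\gamma=0$ will give $\sup|\nx v_i| \leq B\,H^{1/2} F^{1/2}$ where $H = 2\sup|v_i|$ and $F = \sup|g_i|$, once both are estimated uniformly in $\tau$.

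The key step is a sharp pointwise bound for $g_i$. Applying Lemma \ref{lem2} on both $[\tau,\tau+1]$ and $[\tau+1,\tau+2]$ yields $\sup_x \int_\tau^{\tau+2} |u(x,s)|\,ds \leq C$ uniformly in $\tau$, and in particular $\sup_{\O\times(\tau,\tau+2)} |v_i| \leq C$. The naive bound $|f_i(u)| \leq K(1+U^{2+\varepsilon})$ would only give $\sup|g_i|\leq C(1+U^{2+\varepsilon})$ and hence $\sup|\nx v_i| \leq C(1+U^{(2+\varepsilon)/2})$, which is off by a factor of $U^{1/2}$ compared to the claim. The essential trick is to split the top power pointwise as $|u|^{2+\varepsilon} = |u|^{1+\varepsilon}\,|u| \leq C\,U^{1+\varepsilon}\,|u|$ and then use Lemma \ref{lem2} to absorb the remaining factor $|u|$ in $L^1_t L^\infty_x$:
\begin{equation*}
\int_\tau^t |f_i(u(x,s))|\,ds \leq 2K + K\,C\,U^{1+\varepsilon} \int_\tau^{\tau+2} |u(x,s)|\,ds \leq C(1+U^{1+\varepsilon}).
\end{equation*}
Together with $|u_i(x,\tau)| \leq U$, this gives $\sup_{\O\times(\tau,\tau+2)}|g_i| \leq C(1+U^{1+\varepsilon})$.

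Applying Lemma \ref{key_lem} to the PDE for $v_i$ with $\gamma=0$, $v_i(\cdot,\tau) = 0$, $H = 2\|v_i\|_\infty \leq C$, and $F \leq C(1+U^{1+\varepsilon})$ then produces
\begin{equation*}
\sup_{\O\times(\tau,\tau+2)} |\nx v_i(x,t)| \leq B\,H^{1/2}\,F^{1/2} \leq C\,(1+U^{1+\varepsilon})^{1/2} \leq C\,(1+U^{(1+\varepsilon)/2}),
\end{equation*}
which is the desired bound. The main obstacle is precisely the reshuffling of the top power in $|u|^{2+\varepsilon}$: one must trade a factor $|u|^{1+\varepsilon}$ against its $L^\infty$-norm $U^{1+\varepsilon}$ so that the leftover $|u|$ can be absorbed by the uniform-in-$\tau$ $L^1$-in-time bound from Lemma \ref{lem2} -- without this trick one loses exactly the $U^{1/2}$ that separates the claimed exponent $(1+\varepsilon)/2$ from the naive $(2+\varepsilon)/2$.
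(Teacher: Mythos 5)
Your proof is correct and follows essentially the same route as the paper: write the parabolic equation satisfied by $v_i$ with zero initial data, bound the time-integrated source by splitting $|u|^{2+\varepsilon}\leq CU^{1+\varepsilon}|u|$ and absorbing the leftover $|u|$ via Lemma \ref{lem2}, then apply Lemma \ref{key_lem} with $\gamma=0$. The only cosmetic difference is that you carry the harmless extra term $u_i(x,\tau)$ in the source, which is dominated by $C(1+U^{1+\varepsilon})$ anyway.
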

\begin{proof}
	From the definition we see that $v_i$ solves the equation
	\begin{equation*}
		\begin{cases}
			\partial_t v_i(x,t) - d_i\Delta v_i(x,t) = \int_{\tau}^{t}f_i(u(x,s))ds, &x\in\Omega, \; t\in (\tau,\tau+2),\\
			\nx v_i \cdot \nu = 0, &x\in\partial\Omega, \; t\in (\tau,\tau+2),\\
			v_i(x,\tau) = 0, &x\in\Omega.
		\end{cases}
	\end{equation*}
	Using the condition \eqref{A3} and Lemma \ref{lem2} we have
	\begin{align*}
		\sup_{\Omega\times(\tau,\tau+2)}\left|\int_\tau^tf_i(u(x,s))ds\right| &\leq C\sup_{\Omega\times(\tau,\tau+2)}\int_{\tau}^{\tau+2}\left[1 + \sum_{i=1}^mu_i(x,s)\right]\left[1 + \sum_{i=1}^mu_i(x,s)\right]^{1+\varepsilon}ds\\
		&\leq C(1+U)^{1+\varepsilon}\sup_{\Omega\times(\tau,\tau+2)}\int_{\tau}^{\tau+2}\left[1+\sum_{i=1}^mu_i(x,s) \right]ds\\
		&\leq C(1+U^{1+\varepsilon})
	\end{align*}
	with a constant $C$ is independent of $\tau$. We use this estimate and Lemma \ref{key_lem} to the equation of $v_i$ to have
	\begin{equation*}
	\begin{aligned}
		\sup_{\Omega\times(\tau,\tau+2)}|\nx v_i(x,t)| &\leq C(\sup_{\Omega\times(\tau,\tau+2)}|v_i(x,t)|)^{1/2}\left(\sup_{\Omega\times(\tau,\tau+2)}\left|\int_\tau^tf_i(u(x,s))ds\right| \right)^{1/2}\\
		&\leq C\left(\sup_{\Omega\times(\tau,\tau+2)}\left|\int_{\tau}^tu(x,s)ds \right|\right)^{1/2}\left(1+U^{1+\varepsilon} \right)^{1/2}\\
		&\leq C\left(1+U^{\frac{1+\varepsilon}{2}} \right)
	\end{aligned}
	\end{equation*}
	with a constant $C$ being independent of $\tau$, which finishes the proof of Lemma \ref{lem5}.
\end{proof}
\begin{lemma}\label{lem6}
	Let $v$ defined in \eqref{def_v}. Then there exists a constant $C$ independent of $\tau$ such that
	\begin{equation*}
		\sup_{\Omega\times(\tau,\tau+2)}|\Delta v(x,t)| \leq C\left(1+U^{\frac{3+\varepsilon}{4} + \frac{1-\delta}{2(2-\delta)}} \right)
	\end{equation*}	
	where $\delta$ is in Lemma \ref{lem3}.
\end{lemma}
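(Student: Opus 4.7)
The plan is to apply Lemma \ref{key_lem1} directly to equation \eqref{eq_v1}, which presents $v$ as the solution of an inhomogeneous heat equation with diffusion coefficient $d=1$, zero initial data, and inhomogeneity $\psi$ given by \eqref{psi}. Lemma \ref{key_lem1} then reduces the task to bounding the four suprema $\sup|v|$, $\sup|\nx v|$, $\sup|\psi|$, and $\sup|\nx\psi|$ over $\Omega\times(\tau,\tau+2)$, each of which is already controlled by one of the preceding lemmas.

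First, I would collect the bounds on the $v$-factor. Lemma \ref{lem3} gives $\|v\|_{\infty,\Omega\times(\tau,\tau+2)}\leq C$, and Lemma \ref{lem3_1} gives $\sup|\nx v|\leq C(1+U^{(1-\delta)/(2-\delta)})$, so
\[
\sup_{\Omega\times(\tau,\tau+2)}(|v|+|\nx v|)^{1/2}\leq C\bigl(1+U^{(1-\delta)/(2(2-\delta))}\bigr).
\]

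Next, I would bound the $\psi$-factor by splitting $\psi=\varphi'_\tau\sum_i v_i+\sum_i(d_i-1)w_i$, where $v_i(x,t):=\int_\tau^t u_i(x,s)\,ds$ is precisely the object of Lemma \ref{lem5}. The uniform bound on $v_i$ from Lemma \ref{lem2}, together with the trivial $|w_i|\leq U$, yields $\sup|\psi|\leq C(1+U)$. Since $\varphi'_\tau$ depends only on $t$, the spatial gradient of $\psi$ reduces to $\varphi'_\tau\sum_i\nx v_i+\sum_i(d_i-1)\nx w_i$. Lemma \ref{lem5} controls $\sup|\nx v_i|$ by $C(1+U^{(1+\varepsilon)/2})$, while Lemma \ref{lem4} controls $\sup|\nx w_i|$ by $C(1+U^{(3+\varepsilon)/2})$; the latter is dominant, producing $\sup|\nx\psi|\leq C(1+U^{(3+\varepsilon)/2})$. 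Consequently,
\[
\sup_{\Omega\times(\tau,\tau+2)}(|\psi|+|\nx\psi|)^{1/2}\leq C\bigl(1+U^{(3+\varepsilon)/4}\bigr).
\]

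Multiplying the two factors via Lemma \ref{key_lem1} yields exactly the exponent $\tfrac{3+\varepsilon}{4}+\tfrac{1-\delta}{2(2-\delta)}$ claimed in the statement. The argument is therefore essentially bookkeeping of previously established estimates, and the main obstacle is only to keep the exponents properly aligned: one must identify that the dominant contribution to $\nx\psi$ comes from $\nx w_i$ (governed by the quadratic nonlinearity through Lemma \ref{lem4}) rather than from $\nx v_i$, so that the exponent $(3+\varepsilon)/2$ rather than $(1+\varepsilon)/2$ drives the $\psi$-factor, and to note that $\varphi_\tau$, $\varphi'_\tau$ being smooth $t$-only cutoffs makes the splitting of $\nx\psi$ trouble-free.
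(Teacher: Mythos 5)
Your proposal is correct and follows essentially the same route as the paper: apply Lemma \ref{key_lem1} to \eqref{eq_v1}, bound the $v$-factor via Lemmas \ref{lem3} and \ref{lem3_1} and the $\psi$-factor via Lemmas \ref{lem2}, \ref{lem4}, and \ref{lem5}, with the dominant $U^{(3+\varepsilon)/2}$ term in $\nabla_x\psi$ coming from $\nabla_x w_i$. The exponent bookkeeping matches the paper's exactly.
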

\begin{proof}
	We apply Lemma \ref{key_lem1} to equation \eqref{eq_v1} to have
	\begin{equation}\label{e13}
		\sup_{\Omega\times(\tau,\tau+2)}|\Delta v(x,t)| \leq C(\sup_{\Omega\times(\tau,\tau+2)}[|v| + |\nx v|])^{1/2}(\sup_{\Omega\times(\tau,\tau+2)}[|\psi| + |\nx \psi|])^{1/2}.
	\end{equation}
	From the definition \eqref{def_v} and Lemma \ref{lem2}, 
	\begin{equation}\label{e14}
		\sup_{\Omega\times(\tau,\tau+2)}|v| \leq C.
	\end{equation}
	From \eqref{psi}, the definition $w_i(x,t) = \varphi_\tau u_i(x,t)$ and Lemma \ref{lem2} we have
	\begin{equation}\label{e15}
		\sup_{\Omega\times(\tau,\tau+2)}|\psi| \leq C(1+U).
	\end{equation}
	The estimate for gradient of $\psi$,
	\begin{equation*}
		\sup_{\Omega\times(\tau,\tau+2)}|\nx \psi| \leq C\left(1+U^{\frac{3+\varepsilon}{2}}\right)
	\end{equation*}
	is implied from Lemmas \ref{lem4} and \ref{lem5}. Putting \eqref{e13}--\eqref{e15} and Lemma \ref{lem3_1} into \eqref{e13} we have
	\begin{equation*}
		\sup_{\Omega\times(\tau,\tau+2)}|\Delta v(x,t)| \leq C\left(1+U^{\frac{1-\delta}{2-\delta}}\right)^{1/2}\left(1+U^{\frac{3+\varepsilon}{2}} \right)^{1/2} \leq C\left(1 + U^{\frac{3+\varepsilon}{4} + \frac{1-\delta}{2(2-\delta)}}\right)
	\end{equation*}
	with a constant $C$ independent of $\tau$, which completes the proof of Lemma \eqref{lem6}.
\end{proof}

We are now ready to prove the main theorem.
\begin{proof}[Proof of Theorem \ref{thm:main}]
	The first equation in \eqref{eq_v} can be rewritten as
	\begin{equation*}
		\sum_{i=1}^mw_i(x,t) = \Delta v(x,t) + \varphi'_\tau \int_{\tau}^{t}u_i(x,s)ds
	\end{equation*}
	for all $x\in\Omega$ and $t\in (\tau,\tau+2)$. Now we apply Lemmatax \ref{lem6} and \ref{lem2} to get
	\begin{equation}\label{e16}
		\sup_{\Omega\times(\tau,\tau+2)}\sup_{i=1,\ldots, m}|w_i(x,t)| \leq C\left(1 + U^{\frac{3+\varepsilon}{4} + \frac{1-\delta}{2(2-\delta)}} \right),
	\end{equation}
	with a constant $C$ independent of $\tau$, recalling that 
	\begin{equation*}
		U = \sup_{\Omega\times(\tau,\tau+2)}\sup_{i=1,\ldots, m}|u_i(x,t)|.
	\end{equation*}
%	It is easy to see that, if there exists a constant $C$ independent of $\tau$ such that
%	\begin{equation*}
%		\sup_{\Omega\times(\tau,\tau+1)}\sup_{i=1,\ldots, m}|u_i(x,t)| \leq C
%	\end{equation*}
%	holds for all $\tau\in \mathbb N$ satisfying 
%	\begin{equation*}
%	\sup_{\Omega\times(\tau,\tau+1)}\sup_{i=1,\ldots, m}|u_i(x,t)| \leq \sup_{\Omega\times(\tau+1,\tau+2)}\sup_{i=1,\ldots, m}|u_i(x,t)|
%	\end{equation*}
%	then
	We consider $\tau\in \mathbb N$ such that	
	\begin{equation}\label{e17}
		\sup_{\Omega\times(\tau,\tau+1)}\sup_{i=1,\ldots, m}|u_i(x,t)| \leq \sup_{\Omega\times(\tau+1,\tau+2)}\sup_{i=1,\ldots, m}|u_i(x,t)|.
	\end{equation}
%	Then it's easy to see that 
%	\begin{equation}\label{e18}
%		\sup_{\Omega\times(\tau,\tau+1)}\sup_{i=1,\ldots, m}|u_i(x,t)| \leq  C 
%	\end{equation}
%	for all $\tau \in \mathbb N$ if and only if it holds true for all $\tau \in \widehat{\mathbb N}$. 
	Now using the fact that $w_i(x,t) = \varphi_\tau u_i(x,t) = u_i(x,t)$ for all $t\in (\tau+1,\tau+2)$ we obtain from \eqref{e16} and \eqref{e17} that
	\begin{equation*}
		U \leq 2\sup_{\Omega\times(\tau+1,\tau+2)}\sup_{i=1,\ldots, m}|u_i(x,t)| \leq 2C\left(1 + U^{\frac{3+\varepsilon}{4} + \frac{1-\delta}{2(2-\delta)}}\right).
	\end{equation*}
	Because $\delta \in (0,1)$, we can choose $\varepsilon>0$ small enough such that
	\begin{equation*}
		\frac{3+\varepsilon}{4} + \frac{1-\delta}{2(2-\delta)} < 1
	\end{equation*}
	and thus by Young's inequality we get
	\begin{equation*}
		U = \sup_{\Omega\times(\tau,\tau+2)}\sup_{i=1,\ldots, m}|u_i(x,t)|\leq C
	\end{equation*}
	for all $\tau$ such that \eqref{e17} holds. Thanks to Lemma \ref{elementary}, this estimate is also true for all $\tau\in \mathbb N$, with a constant $C$ {\it independent} of $\tau$. This proves the uniform bounds
	\begin{equation*}
		\sup_{\Omega\times[0,\infty)}\sup_{i=1,\ldots, m}|u_i(x,t)| \leq C
	\end{equation*}
	and the proof of Theorem \ref{thm:main} is complete.
\end{proof}
\section{Application to skew-symmetric Lotka-Volterra systems}\label{application}
In this section, we give an application our main result to skew-symmetric Lotka-Volterra systems in three dimensions
\begin{equation}\label{LV}
	\begin{cases}
		\partial_t u_i - d_i\Delta u_i  = -\tau_i u_i + u_i\sum_{j=1}^m a_{ij}u_j, &x\in\Omega, \; t>0, \; i=1,\ldots, m,\\
		\nx u_i\cdot \nu = 0, &x\in\partial\Omega, \; t>0, \; i=1,\ldots, m,\\
		u_i(x,0) = u_{i0}(x), &x\in\Omega, \;i=1,\ldots, m,
	\end{cases}
\end{equation}
where the matrix $A = (a_{ij})_{i,j=1,\ldots, m}$ satisfies 
\begin{equation*}
	 A + A^{\top} = 0.
\end{equation*}
This system was studied previously in one and two dimensions, see e.g. \cite{SY13, PSY17}, in which the solution to \eqref{LV} was proved to be bounded uniformly in time if $\tau_i \geq 0$ for all $i=1, \ldots, m$. Moreover, the trajectory was shown to be relatively compact in $(C(\overline{\Omega}))^m$. In higher dimensions, it was shown recently in \cite{FMT19} that the classical solution exists globally in any dimension with possible polynomial growth in time. In addition, if $\tau_i > 0$ for all $i=1,\ldots, m$, then the solution converges exponentially to zero as $t\to \infty$. 

Up to our best knowledge, the uniform-in-time boundedness of the solution to \eqref{LV} when $n\geq 3$ and $\tau_i \geq 0$ for all $i=1,\ldots, m$, is completely unknown. 
Our main result in Theorem \ref{thm:main} provides the answer affirmatively in all dimensions. Moreover, we can relax the equal sign in $A + A^\top = 0$ to the smaller or equal sign.
\begin{theorem}
	Let $\Omega\subset\mathbb R^n$, $n\geq 3$, be a bounded domain with smooth boundary. Assume that $\tau_i \geq 0$ for all $i=1,\ldots, m$ and componentwise
	\begin{equation*}
		A + A^\top \leq 0.
	\end{equation*}
	Then, for any bounded, nonnegative initial data, the system \eqref{LV} has a unique bounded classical solution which is also bounded uniformly in time, i.e.
	\begin{equation*}
		\sup_{t\geq 0}\sup_{i=1,\ldots, m}\|u_i(t)\|_{\infty,\Omega} < +\infty.
	\end{equation*}
	Moreover, the trajectory is relatively compact in $(C(\overline{\Omega}))^m$.
\end{theorem}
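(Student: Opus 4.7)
The plan is to reduce the theorem to a direct application of Theorem \ref{thm:main} combined with a standard parabolic compactness argument. The nonlinearities here are $f_i(u) = -\tau_i u_i + u_i\sum_{j=1}^m a_{ij} u_j$, which are polynomial (hence locally Lipschitz) and exactly quadratic, so condition \eqref{A3} holds for any $\varepsilon>0$. Quasi-positivity \eqref{A1} is immediate since $f_i(u)=0$ whenever $u_i=0$ with $u\in\mathbb R^m_+$. The crucial point is mass dissipation \eqref{A2}: for $u\in\mathbb R^m_+$ one computes
\begin{equation*}
\sum_{i=1}^m f_i(u) = -\sum_{i=1}^m\tau_i u_i + \sum_{i,j=1}^m a_{ij} u_i u_j = -\sum_{i=1}^m\tau_i u_i + \tfrac{1}{2}\sum_{i,j=1}^m (a_{ij}+a_{ji})u_i u_j \leq 0,
\end{equation*}
where the last inequality uses $\tau_i\geq 0$, $u_i\geq 0$, and the componentwise hypothesis $A+A^\top\leq 0$ together with $u_i u_j\geq 0$. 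Thus \eqref{A1}--\eqref{A2}--\eqref{A3} are satisfied and Theorem \ref{thm:main} yields global existence together with the uniform-in-time $L^\infty$ bound.

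For the relative compactness of the trajectory in $(C(\overline{\Omega}))^m$, I would next exploit the uniform $L^\infty$ bound just obtained. Since each $u_i$ is uniformly bounded, the reaction terms $f_i(u)$ are also uniformly bounded in $L^\infty(\Omega\times(0,\infty))$. Applying standard parabolic regularity (e.g.\ the results of Ladyzhenskaya--Solonnikov--Ural'tseva used already in the paper) to the heat equation
\begin{equation*}
\partial_t u_i - d_i\Delta u_i = f_i(u), \qquad \nx u_i\cdot\nu = 0,
\end{equation*}
on cylinders $\Omega\times[t,t+1]$ with $t\geq 1$ gives a uniform Hölder estimate
\begin{equation*}
\|u_i\|_{C^{\alpha,\alpha/2}(\overline{\Omega}\times[t,t+1])} \leq C
\end{equation*}
for some $\alpha\in(0,1)$ and $C$ independent of $t$. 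In particular $\{u_i(\cdot,t):t\geq 0\}$ is uniformly bounded and uniformly equicontinuous in $C(\overline{\Omega})$.

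The final step is the Arzelà--Ascoli theorem: the uniform bound together with the uniform-in-$t$ modulus of continuity in $x$ imply that $\{u(\cdot,t):t\geq 0\}$ is relatively compact in $(C(\overline{\Omega}))^m$. The only subtle issue is uniqueness/boundedness on $[0,1]$, which is handled by the local-in-time classical theory and the initial-data regularity, and then glued to the tail bounds coming from Theorem \ref{thm:main}. I expect no serious obstacle here; the main content of the theorem is really the verification of \eqref{A2} via the skew-symmetry-type inequality $A+A^\top\leq 0$, after which Theorem \ref{thm:main} does all of the heavy lifting and the compactness is classical.
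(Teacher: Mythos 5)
Your proposal is correct and follows essentially the same route as the paper: verify \eqref{A1}--\eqref{A2}--\eqref{A3} (your computation $\sum_i f_i(u) = -\sum_i \tau_i u_i + \tfrac12\sum_{i,j}(a_{ij}+a_{ji})u_iu_j \le 0$ is exactly the point), invoke Theorem \ref{thm:main} for global existence and the uniform $L^\infty$ bound, and conclude relative compactness by Arzel\`a--Ascoli. The only divergence is in how equicontinuity in $x$ is produced: the paper applies its regularity-interpolation Lemma \ref{key_lem} with $\gamma=0$, using the uniform bounds on $u_i$ and $f_i(u)$ to get a uniform-in-time $C^1(\overline{\Omega})$ bound on the trajectory, whereas you invoke standard parabolic H\"older estimates on unit cylinders $\Omega\times[t,t+1]$. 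Both yield a uniform modulus of continuity and hence compactness; your variant is slightly weaker in output ($C^{\alpha}$ rather than $C^1$) but has the small advantage of working on cylinders with $t\ge 1$ and thus not requiring the $\|u_{i0}\|_{C^1(\Omega)}$ term that appears when Lemma \ref{key_lem} is applied from $t=0$ with merely bounded initial data.
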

\begin{proof}
	The global existence and uniform-in-time bound of the classical solution follows directly from Theorem \ref{thm:main}. Now we can apply Lemma \ref{key_lem} to each equation of $u_i$ to have
	\begin{equation*}
		\sup_{\Omega\times(0,T)}|\nx u_i(x,t)| \leq C\|u_{i0}\|_{C^1(\Omega)} + B\sup_{\Omega\times(0,T)}|u_i(x,t)|^{1/2}\sup_{\Omega\times(0,T)}|f_i(u(x,t))|^{1/2} \leq C
	\end{equation*}
	for $C$ is independent of $T$. Hence the trajectory $\{u(t)\}_{t\geq 0}$ is bounded uniformly in $C^1(\overline{\Omega})^m$. The relative compactness in $C(\overline{\Omega})^m$ therefore follows from the Arzel\`{a}-Ascoli theorem.
\end{proof}

\par{\bf Acknowledgements:} 
This work is supported by the International Training Program IGDK 1754 and NAWI Graz.

\end{document}